\colorlet{DarkRed}{red!60!black}
\colorlet{DarkGreen}{green!50!black}
\colorlet{DarkBlue}{blue!60!black}
\newtheorem{theorem}{Theorem}
\newtheorem{lemma}{Lemma}
\newtheorem{corollary}{Corollary}
\newtheorem{fact}{Fact}
\newcommand{\norm}[1]{\| #1 \|}
\newcommand{\dmax}{d_{\mathrm{max}}}
\newcommand{\dmin}{d_{\mathrm{min}}}
\newcommand{\xstarmax}{x^*_{\mathrm{max}}}
\newcommand{\xstarmin}{x^*_{\mathrm{min}}}
\newcommand{\supp}{{\mathit{supp}}}
\newcommand{\abs}[1]{| #1 |}
\newcommand{\R}{\mathbb{R}}
\newcommand{\set}[2]{\{ \; #1\; | \; #2 \; \}}
\newcommand{\sset}[1]{\{ \; #1\; \}}
\newcommand{\argmin}{\mathop{\mathrm{argmin}}}
\newcommand{\assign}{\mathop{:=}}
\newcommand{\diag}{\mathrm{diag}}
\newcommand{\dist}{\mathrm{dist}}
\newcommand{\lref}[1]{(\ref{#1})}
\newcommand{\cN}{\mathcal{N}}
\newcommand{\cA}{\mathcal{A}}
\begin{document}

\title{Convergence of the Non-Uniform Directed Physarum Model}

\author{Enrico Facca\thanks{Department of Mathematics, University of Padova, Italy}
	\and Andreas Karrenbauer\footnote{Max Planck Institute for Informatics, Saarland Informatics Campus, Germany} \and Pavel Kolev$^\dagger$ \and Kurt Mehlhorn$^\dagger$}
\maketitle

\begin{abstract} 
	The directed Physarum dynamics is known to solve positive linear programs: 
	minimize $c^T x$ subject to $Ax = b$ and $x \ge 0$ for a positive  cost vector $c$. 
	The directed Physarum dynamics evolves a positive vector $x$ according to the
	dynamics $\dot{x} = q(x) - x$. Here $q(x)$ is the solution to $Af = b$ that 
	minimizes the ``energy'' $\sum_i c_i f_i^2/x_i$. 
	
	In this paper, we study the non-uniform directed dynamics $\dot{x} = D(q(x) - x)$, 
	where $D$ is a positive diagonal matrix.
	The non-uniform dynamics is more complex than the uniform dynamics 
	(with $D$ being the identity matrix),
	as it allows each component of $x$ to react with different speed 
	to the differences between $q(x)$ and $x$.
	Our contribution is to show that the non-uniform directed dynamics 
	solves positive linear programs.
\end{abstract}

\section{Introduction}

\emph{Physarum Polycephalum} is a slime mold that apparently is able to solve
shortest path problems. Nakagaki, Yamada, and T\'{o}th~\cite{Nakagaki-Yamada-Toth} report about 
the following experiment; see Figure~\ref{fig:maze}. They built a maze, covered it by pieces of
Physarum (the slime can be cut into pieces, which will reunite if brought into vicinity), 
and then fed the slime with oatmeal at two locations. After a few hours the slime retracted 
to a path following the shortest path in the maze connecting the food sources. 
The authors report that they repeated the experiment with different mazes; in all experiments, 
Physarum retracted to the shortest path. The paper~\cite{Tero-Kobayashi-Nakagaki} proposes 
a mathematical model, the \emph{Physarum dynamics}, for the behavior of the slime in the form 
of a system of coupled differential equations.
In~\cite{Physarum,Bonifaci-Physarum} it was shown that the Physarum dynamics solves 
the shortest path problem and the transportation problem.
It was soon asked whether the dynamics can also solve more complex problems.

A variant of the Physarum dynamics, the directed Physarum dynamics, is known to solve positive linear programs in standard form~\cite{Johannson-Zou,SV-LP}. A positive linear program asks to minimize a linear function $c^T x$ with a positive cost vector $c \in \R_{> 0}^m$ subject to the constraints $Ax = b$ and $x \ge 0$. Here $A \in \R^{n \times m}$ and $b \in \R^n$. Formally,
\begin{equation}\label{LP}
\text{minimize } c^T x \text{ subject to } Ax = b,\ x \ge 0. 
\end{equation}
We assume throughout that the system is feasible and use
\[ I = \set{i}{i \in \supp(x^*) \text{ for some optimal solution $x^*$ to }\lref{LP}} \] to denote the union of the supports of optimal solutions and
\[   F = \set{x}{\text{$x$ is a feasible solution to ~\lref{LP}}}\]
to denote the set of feasible solution. In order to avoid trivialities, we assume $I \not= \emptyset$. The \emph{directed Physarum dynamics} is defined as the dynamical system operating on $x \in \R_{> 0}^m$ according to 
\begin{equation}\label{uniform dynamics}
\dot{x} = q(x) - x,
\end{equation}
where $\dot{x}$ is the derivative of $x$ with respect to time, $x > 0$ and 
\[  q(x) = \argmin_f  \left\{\sum_{i} \frac{c_i}{x_i}f_i^2 \, | \, Af = b \right\} \]
is the minimum energy solution of $Af = b$ according to the weights (``resistances'') $c_i/x_i$. The system is initialized to a point $x^0 \in G \assign  \set{x}{x > 0} \subseteq \R^m$, i.e., $x(0) = x^0 \in G$.

\begin{theorem}[\cite{Johannson-Zou,SV-LP}] The dynamics~\lref{uniform dynamics} has a solution $x(t) \in G$ with $t \in [0,\infty)$. The solution satisfies
	\begin{enumerate}
		\item $\inf_t x_i(t) > 0$ for all $i \in I$,
		\item $\dist(x(t),F) \rightarrow 0$ as $t \rightarrow \infty$, and 
		\item $\lim_{t \rightarrow \infty} c^T x(t) = c^T x^*$.
	\end{enumerate}
\end{theorem}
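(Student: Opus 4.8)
I would begin by recording that $x\mapsto q(x)$ is real-analytic on the open cone $G$: with $X=\diag(x)$, $C=\diag(c)$, the minimum-energy flow is $q(x)=XC^{-1}A^{T}\pi(x)$, where $\pi(x)$ is the minimum-norm solution of $AXC^{-1}A^{T}\pi=b$ (solvable because $F\ne\emptyset$ forces $b\in\mathrm{range}(A)$, and $\mathrm{rank}(AXC^{-1}A^{T})$ is constant on $G$). Hence the ODE has a unique maximal solution $x\colon[0,T^{*})\to G$. Since $Aq(x)=b$ identically, $\tfrac{d}{dt}(Ax-b)=-(Ax-b)$, so the residual decays exponentially: $Ax(t)-b=e^{-t}(Ax^{0}-b)$. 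As $F$ is a nonempty polyhedron and $x(t)>0$, Hoffman's error bound gives $\dist(x(t),F)\le\gamma\,\norm{Ax(t)-b}=\gamma e^{-t}\norm{Ax^{0}-b}\to 0$ (with $\gamma$ the Hoffman constant of $A$), which is item 2 once global existence is in hand.

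\textbf{Step 2: the Lyapunov function, feasible start.} Assume first $x^{0}\in F$; then $Ax(t)=b$ and $x(t)>0$ for all $t$, so $x(t)\in F$. Fix an optimal $x^{*}$, write $\mathrm{opt}=c^{T}x^{*}$, and set
\[
   V(x)=c^{T}x-\mathrm{opt}-\sum_{i\in\supp(x^{*})}c_{i}x_{i}^{*}\ln\frac{x_{i}}{x_{i}^{*}}.
\]
From $\ln u\le u-1$ one gets $V(x)\ge\sum_{i\notin\supp(x^{*})}c_{i}x_{i}\ge 0$ for every $x>0$. Writing $\mathcal{E}(x)=\sum_{i}(c_{i}/x_{i})q_{i}(x)^{2}$, the electrical-flow identities $\mathcal{E}(x)=b^{T}\pi(x)=c^{T}q(x)$ and $\sum_{i}(c_{i}/x_{i})q_{i}(x)x_{i}^{*}=b^{T}\pi(x)$ — all valid because $Ax=Ax^{*}=b$ — together with the fact that $x$ is itself a feasible flow of energy $c^{T}x$ (so $\mathcal{E}(x)\le c^{T}x$) give, along the trajectory, $\tfrac{d}{dt}c^{T}x=\mathcal{E}(x)-c^{T}x\le 0$ and $\tfrac{d}{dt}\big[-\sum_{i\in\supp(x^{*})}c_{i}x_{i}^{*}\ln x_{i}\big]=\mathrm{opt}-\mathcal{E}(x)$. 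Adding, $\dot V=\mathrm{opt}-c^{T}x\le 0$ since $c^{T}x(t)\ge\mathrm{opt}$ for feasible $x(t)$; thus $V(x(t))$ is nonincreasing and $\ge 0$.

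\textbf{Step 3: extracting items 1–3.} Since $c^{T}x$ is nonincreasing, $x_{i}(t)\le c^{T}x^{0}/c_{i}=:M_{i}$; combined with the universal lower bound $x_{i}(t)\ge x_{i}^{0}e^{-t}$ (from $\dot x_{i}\ge-x_{i}$), the trajectory lies in a compact subset of $G$ on every bounded interval, so $T^{*}=\infty$. From $0\le V(x(t))\le V(x^{0})$ and the explicit form of $V$, for each $i_{0}\in\supp(x^{*})$ one isolates $-c_{i_{0}}x_{i_{0}}^{*}\ln(x_{i_{0}}(t)/x_{i_{0}}^{*})\le V(x^{0})+\sum_{i\ne i_{0}}c_{i}x_{i}^{*}\ln(M_{i}/x_{i}^{*})$, i.e. $\inf_{t}x_{i_{0}}(t)>0$; ranging over all optimal $x^{*}$ yields item 1. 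Finally $\int_{0}^{\infty}(c^{T}x(t)-\mathrm{opt})\,dt=V(x^{0})-\lim_{t}V(x(t))<\infty$, and a nonincreasing function whose excess over $\mathrm{opt}$ is integrable must converge to $\mathrm{opt}$; this is item 3, and item 2 now follows from Step 1.

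\textbf{Step 4: the general initial point, and the main obstacle.} For arbitrary $x^{0}\in G$, pick $w$ with $Aw=Ax^{0}-b$ and set $z(t)=x(t)-e^{-t}w$, so $Az(t)=b$ and $\dot z=q(z+e^{-t}w)-z$ is an exponentially small non-autonomous perturbation of the feasible dynamics. I would rerun Steps 2–3, where each identity now carries an error: $\mathcal{E}(x)-c^{T}x$, $c^{T}q(x)-\mathcal{E}(x)=(Ax-b)^{T}\pi(x)$, and $c^{T}x(t)-\mathrm{opt}$ are all $O(e^{-t})$, once one checks that the compensating factors $1/x_{i}$ — and $\norm{\pi(x)}$, which is bounded by $\mathcal{E}(x)/\big(\lambda^{+}_{\min}(AA^{T})\min_{i}(x_{i}/c_{i})\big)$ — grow at most like $e^{O(t)}$, by virtue of $x_{i}(t)\ge x_{i}^{0}e^{-t}$; a Grönwall estimate then keeps $c^{T}x(t)$ bounded. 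With these perturbations $V(x(t))$ stays bounded, $c^{T}x(t)$ converges, and $\int(c^{T}x-\mathrm{opt})$ is bounded above, so the Step-3 conclusions persist. The heart of the argument is Step 2 — identifying $V$ and verifying the chain of electrical-flow identities that collapses $\dot V$ to the manifestly signed quantity $\mathrm{opt}-c^{T}x$; the only genuine technical nuisance is confirming in Step 4 that the blow-up of $1/x_{i}$ along coordinates $i\notin I$ is always dominated by the exponential decay of the residual.
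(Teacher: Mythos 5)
Your approach is genuinely different from the paper's (which proves the non-uniform generalization, Theorem~\ref{Main Theorem}; the statement here is the $D=I$ special case). Your Lyapunov candidate $V(x)=c^{T}x-\mathrm{opt}-\sum_{i\in\supp(x^{*})}c_{i}x_{i}^{*}\ln(x_{i}/x_{i}^{*})$ has a unit coefficient on the linear part and is normalized at $x^{*}$, and along a \emph{feasible} trajectory you obtain the exact identity $\dot V=\mathrm{opt}-c^{T}x$ from the electrical-flow relations alone, with no inequality. The paper instead takes $V(x)=2c^{T}D^{-1}x-\sum_{i}(c_{i}x_{i}^{*}/d_{i})\ln x_{i}$ --- note the factor $2$ --- and bounds $\dot V$ via Cauchy--Schwarz and AM--GM, ending at $\dot V\le c^{T}x^{*}-c^{T}x$; the final sign there hinges on $c^{T}x\ge c^{T}x^{*}$, which is exactly the feasibility your decomposition makes explicit rather than implicit. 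Your two-stage reduction --- feasible initial point with an exact Lyapunov identity, then infeasible initial point treated as an exponentially decaying perturbation --- exploits that for $D=I$ one has $Ax(t)-b=e^{-t}\bigl(Ax^{0}-b\bigr)$; this is unavailable in the non-uniform setting the paper targets, where the trajectory need not approach $F$ before it approaches the optimum, so the paper is forced into the factor-$2$ device and a LaSalle-type limit-point argument (Section~\ref{Details of the Convergence Proof}) where you use Hoffman's error bound. For the uniform theorem as stated, your route is the more elementary one and is closer in spirit to~\cite{SV-LP} than to this paper.

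The one genuine gap is Step~4. The estimate you propose --- $1/x_{i}$ and $\norm{\pi(x)}$ grow at most like $e^{O(t)}$ because $x_{i}(t)\ge x_{i}^{0}e^{-t}$, while the residual decays like $e^{-t}$ --- produces error terms of order $e^{-t}\cdot e^{O(t)}$, which is not automatically bounded, let alone integrable; the Gr\"onwall step does not close as written. What is missing is the sharper a priori control that the paper records: Fact~\ref{bound on q} gives $\norm{q(x)}_{\infty}\le M\norm{b}_{1}$, hence $x_{i}(t)\le\max(x_{i}(0),M\norm{b}_{1})$ (Lemma~\ref{Extendability}a), and the barrier argument of Lemma~\ref{Extendability}c (resp.~Theorem~\ref{no limit point on boundary of $G^*$}) keeps $x_{i}(t)$ for $i\in I$ bounded away from $0$ uniformly in $t$, which is what actually controls $\norm{\pi(x)}$. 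Without importing bounds of that strength, Step~4 is a program rather than a proof; it is precisely where the cited reference~\cite{SV-LP} does the nontrivial work.
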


In this paper, we study the \emph{non-uniform Physarum dynamics}
\begin{equation}\label{non uniform dynamics} 
	\dot{x} = D(q(x) - x),
\end{equation}
where for a fixed positive vector $d\in\R_{>0}^{m}$ we define by $D=\diag(d)\in\R^{m\times m}$ 
	a diagonal matrix with entries $D_{ij}=d_i$ if $i=j$ and $D_{ij}=0$ otherwise.
In this model \lref{non uniform dynamics}, different components of $x$ react with different speed 
to differences between $q(x)$ and $x$. Again the system is initialized with a point $x^0 \in G$, i.e., $x(0) = x^0$. 
The original and the non-uniform Physarum dynamics are both inspired by the study of the slime mold Physarum Polycephalum; 
see Section~\ref{Biological Background}. We generalize the theorem above to the non-uniform dynamics.

\begin{theorem} \label{Main Theorem} 
	The dynamics~\lref{non uniform dynamics} has a solution $x(t) \in G$ with $t \in [0,\infty)$. The solution satisfies
	\begin{enumerate}
		\item $\inf_t x_i(t) > 0$ for all $i \in I$,
		\item $\dist(x(t),F) \rightarrow 0$ as $t \rightarrow \infty$, and 
		\item $\lim_{t \rightarrow \infty} c^T x(t) = c^T x^*$.
	\end{enumerate}
\end{theorem}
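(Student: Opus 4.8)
The plan is to follow the convergence proof for the uniform dynamics~\lref{uniform dynamics} and to isolate exactly which ingredients survive the introduction of $D$. The crucial observation is that the minimum-energy flow $q(x)$, and every algebraic identity attached to it, is completely independent of $D$: by the first-order optimality conditions of the energy minimization there is a potential vector $p=p(x)\in\R^{n}$ with $q_i(x)=(x_i/c_i)(A^{T}p)_i$ for all $i$, the minimum energy equals $E(x):=\sum_i (c_i/x_i)\,q_i(x)^2=b^{T}p$, and $q$ depends smoothly (indeed real-analytically) on $x$ throughout $G$ because feasibility gives $b\in\mathrm{range}(A)$ while the metric $\diag(c/x)$ varies smoothly and positive-definitely. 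Fix once and for all an optimal solution $x^{*}$ of~\lref{LP} with $\supp(x^{*})=I$ (a convex combination of vertex optima) together with an optimal dual solution. Existence and positivity are then standard: Picard--Lindel\"of yields a unique maximal solution $x(t)\in G$, and since $\frac{d}{dt}\ln x_i=d_i\big((A^{T}p(x))_i/c_i-1\big)$, arguing as in the uniform case no coordinate can reach $0$ in finite time; the interval of existence extends to $[0,\infty)$ once $x(t)$ is confined to a fixed compact subset of $G$, which follows from the Lyapunov analysis below.

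The device that absorbs $D$ is that \emph{separable} potentials transfer after dividing each summand by $d_i$: if $\Phi(x)=\sum_i\phi_i(x_i)$ then along~\lref{non uniform dynamics} one has $\frac{d}{dt}\big(\sum_i\phi_i(x_i)/d_i\big)=\sum_i\phi_i'(x_i)\,(q_i(x)-x_i)$, which is precisely the time derivative of $\Phi$ along~\lref{uniform dynamics}, now read as a function of the point $x$. Running the cost and the Bregman-type potential of the uniform proof through this rescaling produces
\[
L_{1}(x)=\sum_i\frac{c_i}{d_i}x_i,
\qquad
L_{2}(x)=\sum_{i\in I}\frac{c_i}{d_i}x^{*}_i\ln\frac{x^{*}_i}{x_i}+\sum_i\frac{c_i}{d_i}(x_i-x^{*}_i)\ \ge\ 0,
\]
with $\dot L_{1}=c^{T}q(x)-c^{T}x$ and, using $\sum_{i\in I}(c_i x^{*}_i/x_i)\,q_i(x)=(x^{*})^{T}A^{T}p=b^{T}p=E(x)$, $\dot L_{2}=c^{T}x^{*}-E(x)+c^{T}q(x)-c^{T}x$. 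These are the \emph{same} functions of $x$ that govern the uniform dynamics, so every pointwise-in-$x$ estimate of the uniform analysis is available verbatim here; in particular, if the uniform proof shows that its Bregman potential is non-increasing and vanishes only at optimal solutions, so does $L_{2}$.

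A non-increasing $L_{2}$, bounded below by $0$, yields all three assertions of Theorem~\ref{Main Theorem} in outline. Its linear part keeps $x(t)$ bounded above (hence global existence), its logarithmic part keeps $x_i(t)$ bounded away from $0$ for every $i\in I$ (assertion~1), and $\int_{0}^{\infty}\big(c^{T}x(t)-c^{T}x^{*}\big)\,dt\le L_{2}(x^{0})<\infty$ supplies the summability behind assertions~2 and~3. To convert the integrated statements into limits I would argue, as in the uniform case, through an $\omega$-limit-set / LaSalle analysis: the trajectory lies in a compact set on which $L_{2}$ is continuous (the only logarithms in $L_{2}$ are over $i\in I$, which stay bounded below), so its $\omega$-limit set $\Omega$ is non-empty, connected, invariant under~\lref{non uniform dynamics}, and contained in $\{x:\dot L_{2}(x)=0\}$; one then shows that every point of $\Omega$ is feasible, whence $\dist(x(t),F)\to0$ (by Hoffman's bound applied to $\norm{Ax(t)-b}$ together with $x(t)\ge 0$), and in fact optimal, whence $c^{T}x(t)\to c^{T}x^{*}$. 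Here assertion~1 is indispensable: it forces the points of $\Omega$ to keep all of $I$ in their support, so that the complementary-slackness structure encoded in $\dot L_{2}(x)=0$ makes them optimal rather than merely stationary.

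I expect the main obstacle to lie in this last passage, for two reasons. First, the trajectory-dependent parts — global existence, the uniform-continuity (Barbalat-type) estimates needed to upgrade $L^{1}$-integrability to pointwise convergence, and the invariance of $\Omega$ — genuinely involve $D$ and must be redone with constants depending on $\dmin$ and $\dmax$; notably $c^{T}x(t)$ itself is \emph{not} monotone along~\lref{non uniform dynamics} (only the reweighted $L_{1}$ is a candidate), so the cost bound has to come entirely from $L_{2}$. Second, one must verify that $\{x:\dot L_{2}(x)=0\}$, once intersected with the largest subset invariant under~\lref{non uniform dynamics}, still reduces to the set of optimal solutions: the defining equation of this set is a statement about $q(x)$ and is therefore $D$-free, but the invariance that prunes it uses the flow, which has changed. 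If the uniform argument needs more than the two separable potentials above — for instance a control of $E(x(t))$, whose time derivative does \emph{not} transfer pointwise since it involves $\dot x$ — then those estimates must be re-derived with the $d_i$'s in place, and that is where I expect the bulk of the technical work.
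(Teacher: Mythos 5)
Your structural insight is exactly the one the paper exploits: any separable potential $\sum_i \phi_i(x_i)/d_i$ has, along the non-uniform dynamics, the same time derivative as a function of $x$ that $\sum_i \phi_i(x_i)$ has along the uniform dynamics, so the Lyapunov analysis is ``$D$-free'' once the potential has been divided by $D$. That is how the paper obtains its Lyapunov function $V(x)=2c^{T}D^{-1}x-\sum_{i\in I}(c_i x_i^{*}/d_i)\ln x_i$, and your derivative computations for $L_1$ and $L_2$ are correct. But there is a real gap in how you assemble these building blocks.

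The paper does \emph{not} use your Bregman-type $L_2$ alone; it uses $V$, which up to an additive constant is $L_1+L_2$, and the factor $2$ in front of $c^{T}D^{-1}x$ is essential. The proof that $\dot V\le 0$ goes $\dot V=2(c^{T}q-c^{T}x)+c^{T}x^{*}-b^{T}p$, then uses Cauchy--Schwarz to get $c^{T}q\le\sqrt{c^{T}x}\sqrt{b^{T}p}\le(c^{T}x+b^{T}p)/2$, and the coefficient $2$ is exactly what makes the $b^{T}p$ cancel and leaves $\dot V\le c^{T}x^{*}-c^{T}x\le 0$. For your $L_2$ one has only $\dot L_2=(c^{T}q-c^{T}x)+c^{T}x^{*}-b^{T}p$; the same chain of estimates yields $\dot L_2\le c^{T}x^{*}-(c^{T}x+b^{T}p)/2$, whose sign is not controlled. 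So the conditional ``if the uniform proof shows that its Bregman potential is non-increasing\dots'' is precisely where the argument can fail: the uniform proof does not rely on the Bregman divergence being monotone by itself, it relies on $V$. You should either verify $\dot L_2\le 0$ directly (which would need a genuinely different inequality than Cauchy--Schwarz/AM--GM) or, better, work with $L_1+L_2$, i.e.\ with $V$.

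This gap propagates. Your bound $\int_0^{\infty}(c^{T}x(t)-c^{T}x^{*})\,dt\le L_2(x^{0})$ requires the \emph{sharper} inequality $\dot L_2\le-(c^{T}x-c^{T}x^{*})$, which is equivalent to $c^{T}q\le b^{T}p$, i.e.\ $x^{T}Rq\le q^{T}Rq$; this is false in general (take $x$ far from $q$ in the $R$-norm with $c^{T}x$ large). The paper's $\dot V\le c^{T}x^{*}-c^{T}x$ is what delivers this integrability. Likewise, your appeal to the linear part of $L_2$ to bound $x(t)$ above is circular: $L_2$ decreasing keeps $c^{T}D^{-1}x$ bounded only if the logarithmic part is already bounded below, i.e.\ only if $x$ is already bounded above. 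The paper instead bounds $x(t)$ \emph{a priori} via the uniform estimate $\|q(x)\|_{\infty}\le\beta=M\|b\|_1$ (a fact about $A$ and $b$ alone), which with Gronwall gives $x_i(t)\le\max(x_i(0),\beta)$; global existence and boundedness precede the Lyapunov analysis rather than following from it. Finally, the LaSalle-style endgame you sketch is close to what the paper does, except the paper shortcuts the invariance step: because $\dot V(x)=0$ already forces $q(x)=x$ and $c^{T}x=c^{T}x^{*}$ (no invariance of the $\omega$-limit set under the flow is needed), every limit point is optimal once one knows, as in your assertion~1 and the paper's Theorem~\ref{no limit point on boundary of $G^*$}, that limit points keep $I$ in their support, and that the right-hand side of the dynamics is continuous on $G^{*}$.
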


This theorem was claimed in~\cite{Physarum-Complexity-Bounds}, albeit with an incorrect proof; see Section~\ref{Biological Background}. 
We note that the non-uniform dynamics~\lref{non uniform dynamics} is more complex that the uniform dynamics~\lref{uniform dynamics}. For example, if $x(0)$ is a feasible solution to~\lref{LP}, then the solution $x(t)$ is feasible\footnote{
	Consider $y(t) = Ax(t) - b$. Then $\dot{y} = A \dot{x} = A(q - x) = b - Ax = -y$. Thus
	$y_i(t) = y_i(0) e^{-t}$ for all $t$ and hence $y_i(0) = 0$ implies $y_i(t) = 0$ for all $t$. In particular, if $x(0)$ is feasible, $x(t)$ is feasible for all $t$. More generally, for arbitrary initial point, the residual $Ax(t) - b$ converges to zero exponentially fast.} 
for all $t$. In contrast, the non-uniform dynamics may move out of feasibility as 
Figure~\ref{example} illustrates.

\begin{figure}[t]
	\begin{center}
		\includegraphics[width=0.311\textwidth]{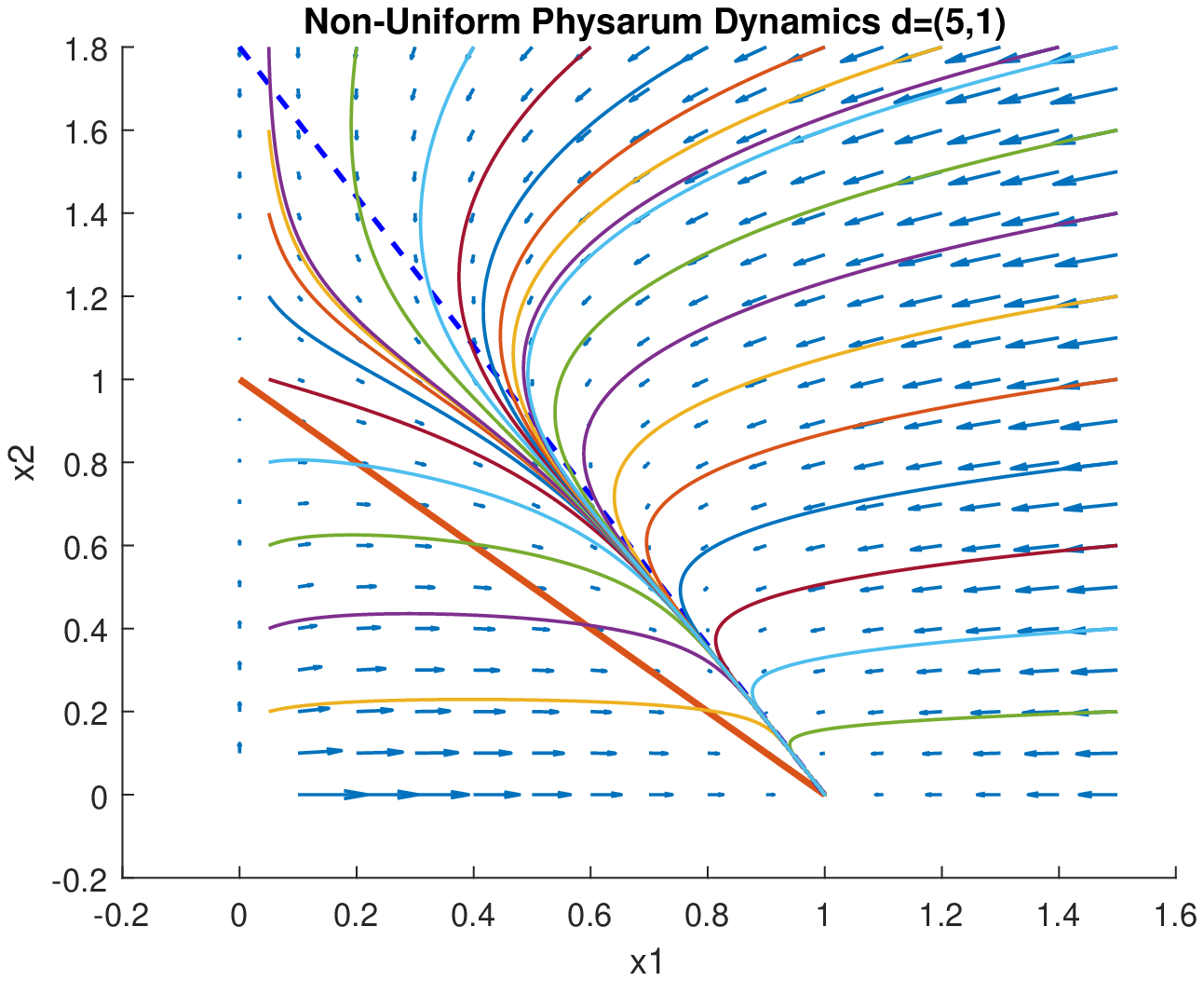}
		\hspace{0.4cm}\vspace{-0.1em}
		\includegraphics[width=0.311\textwidth]{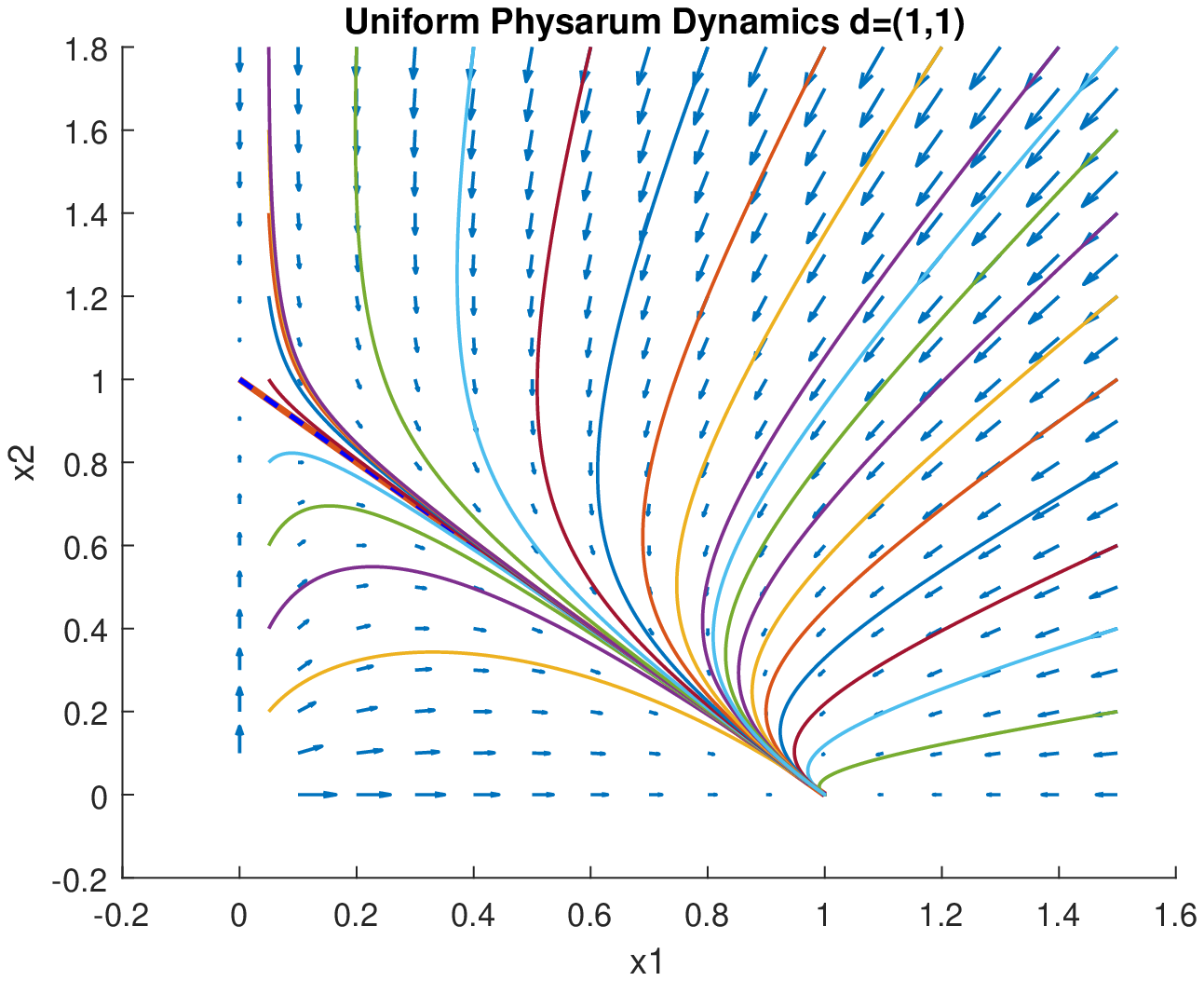}
		\hspace{0.4cm}\vspace{-0.1em}
		\includegraphics[width=0.311\textwidth]{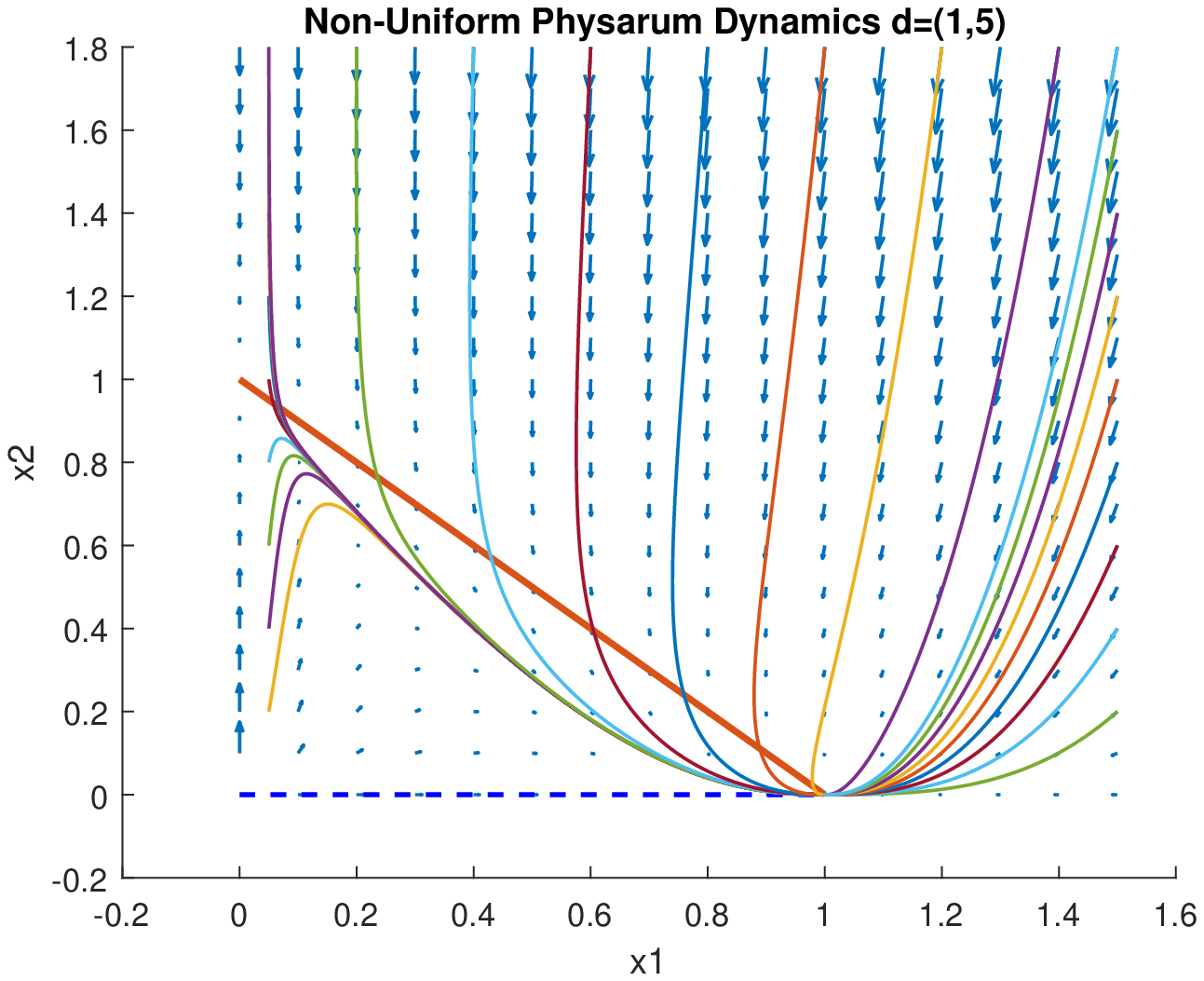}
	\end{center}
	\caption{\label{example}
		Consider the linear program 
		``$\text{minimize } x_1+2x_2 \text{ subject to } x_1+x_2 = 1 \text{ and } \ x \ge 0$''.
		The plot shows the flow fields $\dot{x}(t)$ of the corresponding 
		non-uniform dynamics $d=(5,1)$, the uniform dynamics $d=(1,1)$ and the non-uniform dynamics $d=(1,5)$, respectively. The feasible region ``$x_1 + x_2 = 1$, $x \ge 0$'' is shown in red. Also, several trajectories of $x(t)$ are depicted. For the uniform dynamics, the trajectories converge to the feasible region exponentially fast. In contrast, the solutions to the non-uniform dynamics converge to the feasible region only when they reach the optimal point $(1,0)$. Note that in the right plot ($d_1 = 1$ and $d_2 = 5$), the solutions enter the optimal point horizontally. Whether the solution enters from the left or from the right depends on the initial point. In the plot on the left ($d_1 = 5$, $d_2 = 1$), the solutions enter the optimal point with slope $-9/5$.
		The dashed blue line depicts this slope and an analytical solution is presented in Section~\ref{Approach to Optimum}.
	}
\end{figure}

A fundamental tool for proving convergence of a dynamical system is a Lyapunov function. It is a function defined on the states of the system whose derivative with respect to time is non-positive. We exhibit a Lyapunov function for the dynamics~\lref{non uniform dynamics}. Let $x^*$ be any optimal solution to~\lref{LP} with $I = \supp(x^*)$ (note that a convex combination of optimal solutions is also an optimal solution. Therefore there is an optimal solution with support equal to $I$) and consider

\begin{equation}\label{Lyapunov Function}
V(x) = 2 c^T D^{-1} x - \sum_i  \frac{c_i x_i^*}{d_i} \ln x_i
\end{equation}

\begin{theorem} $\dot{V}(x) \le 0$ for all $x \in G$ and there  is a constant $C$ (depending on $x(0)$) such that $V(x(t)) \ge C$ for all $t \in (0,\infty)$. \end{theorem}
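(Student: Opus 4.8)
The plan is to show $\dot V \le 0$ by a direct computation, using the energy-minimality characterization of $q(x)$, and then to derive the lower bound $V(x(t)) \ge C$ from the two facts that $\dot V \le 0$ and that $V$ is bounded below on any trajectory of the dynamics because the trajectory stays in a region where $V$ cannot go to $-\infty$.

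\medskip

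\emph{Step 1: Differentiating $V$ along the flow.} Write $q = q(x)$ and recall $\dot x_i = d_i(q_i - x_i)$. Then
\[
\dot V(x) = 2 c^T D^{-1} \dot x - \sum_i \frac{c_i x_i^*}{d_i}\,\frac{\dot x_i}{x_i}
= 2\sum_i \frac{c_i}{d_i}\, d_i (q_i - x_i) - \sum_i \frac{c_i x_i^*}{d_i}\,\frac{d_i(q_i-x_i)}{x_i},
\]
so that the $d_i$'s cancel and
\[
\dot V(x) = 2 c^T(q - x) - \sum_i c_i x_i^* \,\frac{q_i - x_i}{x_i}
= 2 c^T q - 2 c^T x - \sum_i \frac{c_i x_i^*}{x_i}\, q_i + c^T x^*.
\]
This is the crucial point: the non-uniform weights $1/d_i$ were chosen precisely so that they disappear after differentiation, and what remains is (formally) an expression that appeared in the analysis of the \emph{uniform} dynamics. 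So after this step the non-uniformity is gone and we are reduced to a purely algebraic inequality about $q(x)$.

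\medskip

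\emph{Step 2: Bounding the terms via energy-minimality.} Let $R = \diag(c_i/x_i)$ be the resistance matrix, so $q$ minimizes $f^T R f$ over $\{f : Af = b\}$. Two standard consequences of this variational characterization (and of $Ax^\ast = b = Aq$) are: first, the ``no-work'' identity $q^T R x^\ast = q^T R q$, i.e. $\sum_i \frac{c_i}{x_i} q_i x_i^\ast = \sum_i \frac{c_i}{x_i} q_i^2$; and second, the energy bound $q^T R q \le (x^\ast)^T R x^\ast = \sum_i c_i x_i^\ast{}^2/x_i$. Using the first identity, $\sum_i \frac{c_i x_i^*}{x_i} q_i = \sum_i \frac{c_i}{x_i} q_i^2$. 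Substituting into Step 1,
\[
\dot V(x) = 2 c^T q - 2 c^T x + c^T x^* - \sum_i \frac{c_i}{x_i} q_i^2.
\]
Now $c^T q = \sum_i c_i q_i = \sum_i \frac{c_i}{x_i} q_i x_i \le \sqrt{\sum_i \frac{c_i}{x_i} q_i^2}\cdot\sqrt{\sum_i c_i x_i}$ by Cauchy--Schwarz, and separately $\sum_i \frac{c_i}{x_i} q_i^2 \le c^T x^\ast$ by the energy bound together with $q^T R q \le (x^\ast)^T R x^\ast$. Combining these estimates, one checks that
\[
\dot V(x) \le -\Bigl(\sqrt{\textstyle\sum_i \frac{c_i}{x_i} q_i^2} - \sqrt{c^T x}\Bigr)^2 - \bigl(c^T x - c^T x^\ast\bigr) \;-\; \Bigl(c^T x^\ast - \textstyle\sum_i \frac{c_i}{x_i}q_i^2\Bigr) \le 0,
\]
the last inequality because $c^T x \ge c^T x^\ast$ fails in general (trajectories leave feasibility), so I would instead group the terms as a single sum of squares; the cleanest route is to write $\dot V(x) = -\sum_i \frac{c_i}{x_i}(q_i - x_i)^2 - \bigl(c^T x^\ast - \sum_i \frac{c_i}{x_i} q_i^2\bigr)$ after re-expanding, and observe both summands are $\ge 0$ (the first is a square; the second is $(x^\ast)^T R x^\ast - q^T R q + (c^T x^\ast - (x^\ast)^T R x^\ast)$ — here I must be careful and will recompute to pin down the exact non-negative decomposition). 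I expect this re-bookkeeping of the quadratic terms to be the main obstacle: getting a clean manifestly-non-positive form for $\dot V$.

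\medskip

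\emph{Step 3: The lower bound on $V(x(t))$.} From $\dot V \le 0$, $V(x(t)) \le V(x(0))$ for all $t$, so the trajectory is confined to the sublevel set $\{x \in G : V(x) \le V(x(0))\}$. On this set each $\ln x_i$ with $i \in I$ is controlled: since $2c^T D^{-1}x \ge 0$ and all coefficients $c_i x_i^\ast/d_i$ are $\ge 0$ (and positive for $i \in I$), $V(x) \le V(x(0))$ forces $\sum_{i\in I} \frac{c_i x_i^\ast}{d_i}\ln x_i \ge -V(x(0)) + 2c^T D^{-1}x \ge -V(x(0))$, giving an \emph{upper} bound of the form $x_i \le e^{O(1)}$ for each $i\in I$ — hence an upper bound on $-\ln x_i$ from below only if $x_i$ is bounded away from $0$, which is exactly Theorem~\ref{Main Theorem}(1) applied to the trajectory. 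More directly: by Theorem~\ref{Main Theorem}(1) there is $\delta > 0$ with $x_i(t) \ge \delta$ for all $i \in I$ and all $t$, and the residual $Ax(t)-b$ stays bounded so $x(t)$ lies in a bounded region (a linear program with bounded residual and $x\ge 0$ has bounded coordinates once $I\neq\emptyset$), giving an upper bound $x_i(t) \le M$. Then $V(x(t)) \ge 0 - \sum_{i\in I}\frac{c_i x_i^\ast}{d_i}\ln x_i(t) \ge -\sum_{i\in I}\frac{c_i x_i^\ast}{d_i}\ln M =: C$, which depends only on $x(0)$ through $M$. This completes the proof. (If one wishes to avoid invoking Theorem~\ref{Main Theorem}(1), the lower bound on $V$ along trajectories can instead be bootstrapped from $\dot V\le 0$ together with the exponential decay of the residual noted in the footnote, but the argument above is the shortest.)
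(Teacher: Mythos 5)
Your Step~1 matches the paper's computation: using $[x^*]^T R q = b^T p = q^T R q$ one gets $\dot V(x) = 2c^T q - 2c^T x + c^T x^* - q^T R q$. Step~2, however, does not reach a valid non-positive form, and you say so yourself. Expanding $(q-x)^T R(q-x) = q^T R q - 2\,c^T q + c^T x$ gives the \emph{exact} identity
\[
\dot V(x) \;=\; -(q-x)^T R\,(q-x)\;-\;\bigl(c^T x - c^T x^*\bigr),
\]
which is not the decomposition $-(q-x)^T R(q-x) - \bigl(c^T x^* - q^T R q\bigr)$ you proposed (the two differ by $q^T R q - 2c^T x + c^T x^*$, which does not vanish). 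The paper reaches this same place by Cauchy--Schwarz, $c^T q \le \sqrt{c^T x}\,\sqrt{b^T p}$, followed by AM--GM, giving $\dot V(x) \le c^T x^* - c^T x$, and then in its last step (inequality~(3)) simply asserts $c^T x \ge c^T x^*$. Your instinct --- that ``$c^T x \ge c^T x^*$ fails in general (trajectories leave feasibility)'' --- is exactly the right thing to scrutinize, and it is not a bookkeeping issue: $c^T x \ge c^T x^*$ does not hold for all $x \in G$. In the example of Figure~1 with $d=(1,1)$, at $x=(0.7,\,0.01)$ one has $c^T x = 0.72 < 1 = c^T x^*$ while $(q-x)^T R(q-x) \approx 0.12$, so the identity above gives $\dot V(x) \approx 0.16 > 0$. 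So there is a genuine gap here that cannot be closed by regrouping the quadratic terms; you must either justify the paper's inequality~(3) on the set that actually matters, or choose a different Lyapunov candidate.

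Step~3 is a sound deduction once $\dot V\le 0$ is in hand, and it is the paper's argument in spirit if not in form. The paper does not exhibit a closed-form constant~$C$; its theorem ``no limit point on the boundary of $G^*$'' uses $\dot V\le 0$ to bound $\log(1/x_h(t))$ for $h\in I$ uniformly in $t$, and the ``Putting it Together'' section then concludes $V(x(t))\ge V(p)$ for any limit point $p\in G^*$ because $V$ is non-increasing. Your route --- invoke the lower bound $x_i(t)\ge\delta>0$ for $i\in I$ (which in the paper is derived from $\dot V\le 0$ alone, so there is no circularity with the quantity you are bounding) together with the a~priori upper bound $x_i(t)\le\max(x_i(0),\beta)$ from the extendability lemma --- makes $C$ explicit, which is fine. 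But both parts of your argument ultimately stand or fall with Step~2.
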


It follows that $\lim_{t \rightarrow \infty} V(x(t))$ exists and $\lim_{t \rightarrow \infty} \dot{V}(x(t)) = 0$. By the first item of Theorem~\ref{Main Theorem} any limit point\footnote{A point $p$ is a (positive) limit point if there is a sequence $t_n \rightarrow \infty$ such that $x(t_n) \rightarrow p$ as $n \rightarrow \infty$.} of the dynamics~\lref{non uniform dynamics} will lie in
\[       G^* = \set{x}{x \ge 0 \text{ and } x_i > 0 \text{ for all } i \in I} .\]
It then follows that the dynamics converges to the set
\[                              E = \set{x \in G^*}{\dot{V}(x) = 0}.\]
We will see below that $\dot{V}(x) = 0$ implies $c^T x = c^T x^*$ and $q(x) = x$. In particular, $x \in E$ implies $x \ge 0$ and $x_i > 0$ for $i \in \supp(x^*)$, $q(x) = x$, i.e., $x$ is a fixed point of the dynamics and a feasible solution to the LP (since $Ax= Aq(x) = b)$, and $c^T x = c^T x^*$, i.e., $x$ is an optimal solution to the LP.

This paper is structured as follows. 
In Section~\ref{Biological Background} we review the biological background and discuss related work. 
In Section~\ref{Existence} we prove existence of a solution with domain $[0,\infty)$. 
In Section~\ref{Lyapunov} we analyze the Lyapunov function. 
In Section~\ref{Details of the Convergence Proof} we complete the proof of Theorem~\ref{Main Theorem}. 
In Section~\ref{Approach to Optimum}, we study in more detail how
the solution approaches the optimum in the example of Figure~\ref{example}.
In Section~\ref{sec:NonUnifMtxInit_NumEval}, we conduct a numerical experiment that explores 
the convergence-time dependence of the non-uniform dynamics~\lref{non uniform dynamics}
on the reactivity matrix $D$.

\section{Biological Background and Related Work}\label{Biological Background}

\begin{figure}[t]
	\begin{center}
		\includegraphics[width=0.4\textwidth]{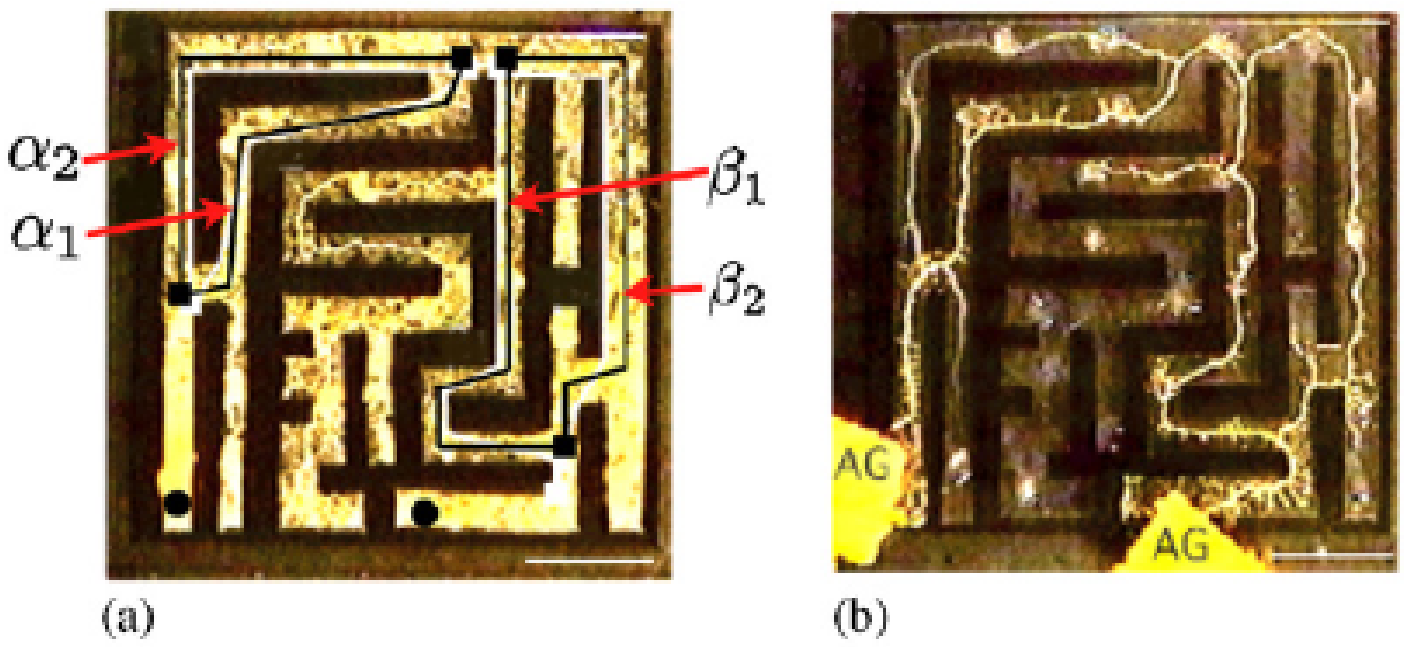}
		\hspace{0.3cm}\vspace{-0.1em}
		\includegraphics[width=0.37\textwidth]{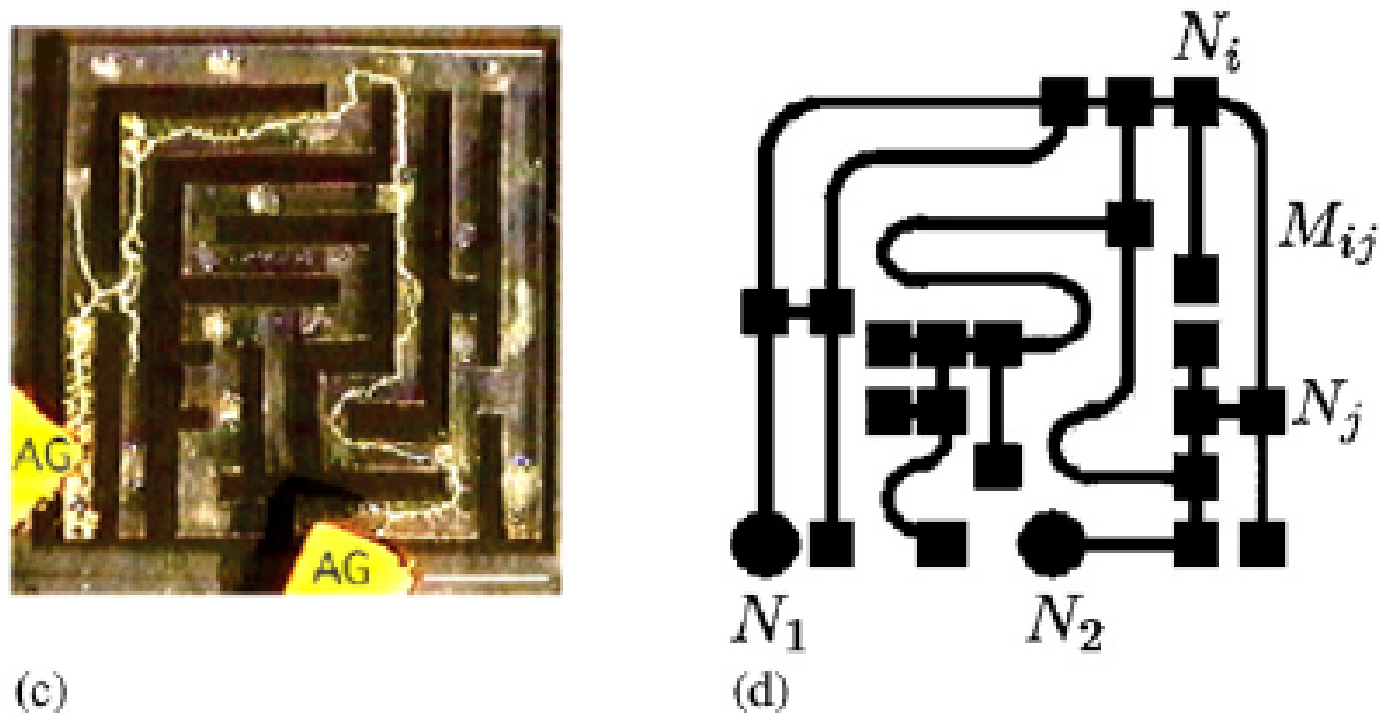}
	\end{center}
	\caption{\label{fig:maze} The experiment in~\cite{Nakagaki-Yamada-Toth}
		(reprinted from there): (a) shows the maze uniformly covered by Physarum;
		yellow color indicates presence of Physarum. Food (oatmeal) is provided at the
		locations labeled AG. After a while the mold retracts to the shortest path
		connecting the food sources as shown in (b) and (c). (d) shows the underlying
		abstract graph. The video~\cite{Physarum-Video} shows the
		experiment.
	}
\end{figure}

\emph{Physarum Polycephalum} is a slime mold that apparently is able to solve
shortest path problems. Nakagaki, Yamada, and T\'{o}th~\cite{Nakagaki-Yamada-Toth} report about the following experiment; see Figure~\ref{fig:maze}. They built a maze, covered it by pieces of Physarum (the slime can be cut into pieces which will reunite if brought into vicinity), and then fed the slime with oatmeal at two locations. After a few hours the slime retracted to a path following the shortest path in the maze connecting the food sources. The authors report that they repeated the experiment with different mazes; in all experiments, Physarum retracted to the shortest path.

The paper~\cite{Tero-Kobayashi-Nakagaki} proposes a mathematical model for the behavior of the slime and argues extensively that the model is adequate. Physarum is modeled as an electrical network with time varying resistors. We have a simple \emph{undirected} graph $G = (N,E)$ with two distinguished nodes modeling the food sources. Each edge $e \in E$ has a positive length $c_e$ and a positive capacity $x_e(t)$; $c_e$ is fixed, but $x_e(t)$ is a function of time. The resistance $r_e(t)$ of $e$ is $r_e(t) = c_e/x_e(t)$. In the electrical network defined by these resistances, a current of value 1 is forced from one of the distinguished nodes to the other. For an (arbitrarily oriented) edge $e = (u,v)$, let $q_e(t)$ be the resulting current over $e$. Then, the capacity of $e$ evolves according to the differential equation  
\begin{equation}\label{PD}
\dot{x}_e(t) = | q_e(t) | - x_e(t),
\end{equation}
where $\dot{x}_e$ is the derivative of $x_e$ with respect to time.

Nakagaki~et al.~\cite{PhysarumMinimumRiskPath} pointed out that different edges may react with different speed to the differences between flow and capacity. For example, Physarum prefers darkness over bright light and hence edges in a bright environment react differently than edges in darkness. This let to the non-uniform dynamics
\begin{equation}\label{NPD}
\dot{x}_e(t) = 
d_e(\abs{q_e(t)} - x_e(t)),
\end{equation}
where $d_e$ is an indicator for the reactivity of an edge.

The biological experiments concern shortest paths. 
The papers~\cite{Physarum,Bonifaci-Physarum} showed that the Physarum dynamics
can solve the shortest path problem and the transportation problem; 
here $A$ is the node-arc incidence matrix of a directed graph, 
$b$ is the supply-demand vector of a transportation problem, 
i.e., $\sum_i b_i = 0$, and $c > 0$ are the edge costs. 
It was soon asked whether the dynamics~\lref{PD} can also solve more complex problems:
\cite{SV-Flow} extended the result to more general flow problems,
\cite{ZhangM18} conducted an extensive experimental study on general flow problems
with convex and monotonically increasing objective functions,
and \cite{SV-IRLS,BeckerBonifaciKarrenbauerKolevMehlhorn,Facca-Cardin-Putti} 
showed that it can solve linear programs of the form
\begin{equation}\label{eq:absLP}
\text{minimize } c^T \abs{x} \text{ subject to } Ax = b,
\end{equation}
where $c$ is a positive vector\footnote{
	\cite{BeckerBonifaciKarrenbauerKolevMehlhorn} even shows that the dynamic \lref{PD}
	solves \lref{eq:absLP} when $c$ is a non-negative cost vector such that
	$c^T|v|>0$ for all $v\in\mathrm{ker}(A)$.}.
\cite{Karrenbauer-Kolev-Mehlhorn:NonUniformPhysarum} generalized the latter result
to the non-uniform dynamics~\lref{NPD}.
Moreover, the dynamics~\lref{NPD} inspired the Dynamic Monge-Kantorovich model which was introduced in~\cite{FCP18} and was recently applied in~\cite{FDCP20} for computing the numerical solution of the Optimal Transport Problem \cite{Ambrosio,Santambrogio}.

The directed version of the Physarum dynamics evolves according to the equation
\begin{equation}\label{directed dynamics}
\dot{x}_e(t) = q_e(t) - x_e(t).
\end{equation}
No biological significance is claimed for this dynamics.
\cite{Ito-Convergence-Physarum,SV-LP} showed that the dynamics~\lref{directed dynamics} 
	solves linear programs of the form
\begin{equation}\label{eq:dirLP}
\text{minimize } c^T x \text{ subject to } Ax = b, \ x \ge 0,
\end{equation}
where $c$ is a positive vector.
In~\cite{Physarum-Complexity-Bounds}, convergence was claimed for 
the non-uniform Physarum dynamics \lref{non uniform dynamics}, i.e.
\[ \dot{x}_e(t) = d_e (q_e(t) - x_e(t)). \]
Only a proof sketch was given; the claim has been withdrawn by the authors~\cite{erratum-Physarum-Complexity-Bounds}.
Here, we show convergence using a Lyapunov-based argument.

\section{Existence of a Solution}{\label{Existence}}

As mentioned above, we show that the dynamics~\lref{non uniform dynamics} with starting point 
$x(0) \in G$ has domain $[0,\infty)$ and stays in $G$. Moreover, for the limit points of the dynamics, the coordinates in $I$ are positive and the coordinates outside $I$ are zero. In order to talk conveniently about these limit points, we define the dynamics on a superset of $G$. Let
\begin{align*}         
G^* &= \set{x}{\text{$x \ge 0$ and $x_i > 0$ for $i \in I$}}
= \cup_{B;\ I \subseteq B \subseteq [m]} G_B^*,\\ \intertext{where}
G_B^* &= \set{x}{\text{$x_j > 0$ for $j \in B$ and $x_j = 0$ for $j \in [m] \setminus B$}}.
\end{align*}
We can define $q(x)$ for any $x \in G^*$: 
\[    
q(x) = \argmin_f \left\{ \sum_{i \in \supp(x)} \frac{c_i}{x_i} f_i^2 \, |\, \text{$Af = b$ and $\supp(f) \subseteq \supp(x)$} \right\}.
\]
be the minimum energy solution $q(x)$ with respect to the resistances $r_i = c_i/x_i$ and with support contained in $\supp(x)$. The following characterization of $q(x)$ is well-known. We use $R = R(t)$ to denote the diagonal matrix with diagonal entries $c_i/x_i$. 

\begin{lemma}\label{Formula for q} 
	For $x \in G^*$, let $B = \supp(x)$ and $N = [m] \setminus B$. Obtain $R_B$ by deleting the rows and columns corresponding to $N$, obtain $A_B$ from $A$ by first deleting the columns in $N$ and then keeping a maximum set of independent rows, let $b_B$ be the restriction of $b$ to these rows, and let $p_B$ be a vector of the same dimension as $b_B$. Then $q(x)$ is uniquely determined by $q = (q_B,q_N)$, $q_N = 0$, $q_B = R_B^{-1} A_B^T p_B$, where $p_B = (A_B R_B^{-1} A_B)^{-1} b_B$. Defining $p_h = 0$ for $h \not\in B$, we have the equalities 
	$ b^T p = q^T R q$.
\end{lemma}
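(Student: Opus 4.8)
The plan is to recognise Lemma~\ref{Formula for q} as the classical optimality conditions for a weighted least–squares (``minimum energy'') problem with affine constraints, and to derive it by Lagrange multipliers. First I would strip away the coordinates in $N$: every $f$ that is feasible for the optimization problem defining $q(x)$ has $\supp(f)\subseteq B$, so $f_N=0$, the objective becomes $f_B^T R_B f_B$, and the constraint $Af=b$ becomes $A_{(B)} f_B = b$, where $A_{(B)}$ denotes the submatrix of $A$ with the columns indexed by $N$ deleted. Since $I\subseteq B$ and there is an optimal (hence feasible) solution $x^*$ with $\supp(x^*)=I$, the system $A_{(B)}f_B=b$ is consistent; therefore deleting its linearly dependent rows does not change its solution set, and the problem is equivalent to minimizing $f_B^T R_B f_B$ over the nonempty affine subspace $\{f_B : A_B f_B = b_B\}$.

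On that subspace the objective is continuous, coercive and strictly convex, because $R_B\succ 0$ (as $c_i,x_i>0$ for $i\in B$); hence a unique minimizer exists, and since the problem is convex the Lagrange/KKT conditions are necessary and sufficient for optimality. Introducing a multiplier $p_B$ and writing the Lagrangian $f_B^T R_B f_B - 2 p_B^T (A_B f_B - b_B)$, stationarity in $f_B$ gives $R_B f_B = A_B^T p_B$, i.e. $q_B = R_B^{-1} A_B^T p_B$; substituting into $A_B f_B = b_B$ yields the normal equation $A_B R_B^{-1} A_B^T p_B = b_B$. Because $A_B$ has full row rank (it was formed by keeping a maximal independent set of rows) and $R_B^{-1}$ is positive definite, the matrix $A_B R_B^{-1} A_B^T$ is symmetric positive definite, hence invertible, so $p_B = (A_B R_B^{-1} A_B^T)^{-1} b_B$. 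This determines $q=(q_B,0)$ uniquely, and extending $p$ by zeros on the discarded rows gives the stated closed form.

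The energy identity $b^T p = q^T R q$ then follows by direct substitution: using $q_N=0$ and $q_B = R_B^{-1}A_B^T p_B$ we get $q^T R q = q_B^T R_B q_B = p_B^T A_B R_B^{-1} A_B^T p_B = p_B^T b_B$, where the last equality is the normal equation; and $p_B^T b_B = p^T b$ since $p$ and $b$ agree with $p_B$ and $b_B$ on the kept rows while $p$ vanishes on the others. I do not expect a genuine obstacle in this argument; the one place that deserves a sentence of care is the reduction step — that discarding the dependent rows of $A_{(B)}$ preserves the feasible set — which is exactly the point that uses consistency of $A_{(B)}f_B=b$, guaranteed here by the existence of an optimal solution $x^*$ with $\supp(x^*)=I\subseteq B$ (and is why the assumption $I\neq\emptyset$, or more precisely feasibility of the LP, is invoked).
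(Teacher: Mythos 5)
Your proof is correct and follows essentially the same route as the paper's: Lagrange/KKT stationarity gives $R_B q_B = A_B^T p_B$, substitution into the constraint gives the normal equation, invertibility follows from full row rank of $A_B$ and positive-definiteness of $R_B^{-1}$, and the energy identity drops out by substitution. The only difference is that the paper ``for simplicity'' assumes $\supp(x)=[m]$ and skips the reduction to $B$ and the row-selection step, whereas you spell out why restricting to the kept rows is harmless (consistency of $A_{(B)}f_B=b$, guaranteed by $I\subseteq B$) — a helpful bit of extra care, not a divergence in method.
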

\begin{proof} 
	For simplicity, we assume $\supp(x) = [m]$. By the KKT conditions for convex optimization, the optimal solution $q$ must satisfy $R q = A^T p$ for some $p$. Since $A q = b$ this implies, $A R^{-1} A^T p = b$ and $q = R^{-1} A^T p$. The matrix $A R^{-1} A^T$ is non-singular since $R$ is a diagonal matrix with positive entries and $A$ has full row rank. 
	Then, $b^T p = b^T_B p_B = (A_B q_B)^T p_B = q_B^T R_B R_B^{-1} A_B^T  p_B = q_B^T R_B q_B  = q^T R q$.
\end{proof}

The Physarum dynamics for $x \in G^*$ becomes
\begin{equation}    \dot{x}_i   = f_i(x) \assign \begin{cases}  d_i (q_i - x_i)    & \text{ if $i \in \supp(x)$ }\\
0                      &\text{ if $i \not\in \supp(x)$}
\end{cases}  \label{full non-uniform dynamics}
\end{equation}

We show existence of a solution with domain $[0,\infty)$ for each initial point $x^0 \in G^*$. Our argument is based on~\cite{SV-LP}.
Let $M = \max \sset{\text{absolute value of the determinant of a square submatrix of $A$}}$.
\begin{fact}[Lemma 3.3 in~\cite{BeckerBonifaciKarrenbauerKolevMehlhorn}]\label{bound on q} 
	For every $x \in G^*$: $\norm{q(x)}_\infty \le \beta \assign M\norm{b}_1$. 
\end{fact}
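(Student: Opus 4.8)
The plan is to represent $q(x)$ as a convex combination of basic solutions of the linear system $Af = b$ and then bound each basic solution by Cramer's rule. It suffices to treat the case $\supp(x) = [m]$: for general $x \in G^*$ with $B = \supp(x)$, the flow $q(x)$ is supported on $B$ and solves the reduced full-row-rank system $A_B f = b_B$ of Lemma~\ref{Formula for q}, whose square submatrices are square submatrices of $A$, so the corresponding constant for the reduced instance is at most $M$. So assume $\supp(x) = [m]$ and (keeping a maximal independent set of rows, which does not change $q$) that $A$ has full row rank $n$. Put $W = R^{-1} = \diag(x_1/c_1, \dots, x_m/c_m)$, a positive diagonal matrix; by Lemma~\ref{Formula for q}, $q = q(x) = W A^{T}\bigl(A W A^{T}\bigr)^{-1} b$.

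The crux of the argument is the identity
\[
q \;=\; \sum_{B} \lambda_B\, q^{(B)},
\qquad
\lambda_B \;=\; \frac{(\det A_B)^2 \prod_{i\in B} w_i}{\sum_{B'} (\det A_{B'})^2 \prod_{i\in B'} w_i},
\]
where $B, B'$ range over the $n$-element subsets of $[m]$ with $\det A_B \neq 0$, $w_i = x_i/c_i$, $A_B$ denotes the $n \times n$ submatrix of $A$ with columns indexed by $B$, and $q^{(B)}$ is the associated basic solution, $q^{(B)}_B = A_B^{-1} b$ and $q^{(B)}_j = 0$ for $j \notin B$. The denominator equals $\det(A W A^{T})$ by the Cauchy--Binet formula, and matching the numerators coordinate by coordinate --- i.e.\ showing that the $j$-th coordinate of $W A^{T} \operatorname{adj}(A W A^{T}) b$ equals $\sum_{B \ni j} \det(A_B)\,\det(A_B^{j\mapsto b}) \prod_{i \in B} w_i$, where $A_B^{j\mapsto b}$ is $A_B$ with its $j$-th column replaced by $b$ --- follows from a Cauchy--Binet expansion of the adjugate together with Cramer's rule for $A_B^{-1} b$. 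This bookkeeping step is where the real work lies; I expect it to be the main obstacle, the rest being routine. Since the $\lambda_B$ are nonnegative and sum to $1$, this exhibits $q$ as a genuine convex combination of the $q^{(B)}$.

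To finish, bound each basic solution by Cramer's rule: for $j \in B$ we have $q^{(B)}_j = \det(A_B^{j\mapsto b})/\det(A_B)$, and expanding $\det(A_B^{j\mapsto b})$ along its $j$-th column (which is $b$) gives $\abs{\det(A_B^{j\mapsto b})} \le \norm{b}_1 M$, since every $(n-1)\times(n-1)$ minor of $A$ has absolute value at most $M$, while $\abs{\det A_B} \ge 1$ (using that $A$ is integral, as in the motivating network applications; for general real $A$ one obtains the same bound with $M$ replaced by $M/\min_B \abs{\det A_B}$). Hence $\norm{q^{(B)}}_\infty \le M \norm{b}_1$ for every basis $B$, and therefore, $q$ being a convex combination of the $q^{(B)}$, $\norm{q(x)}_\infty \le \max_B \norm{q^{(B)}}_\infty \le M \norm{b}_1 = \beta$.
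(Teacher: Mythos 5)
The paper does not prove this statement---it is imported as a Fact (Lemma~3.3 of the cited reference~\cite{BeckerBonifaciKarrenbauerKolevMehlhorn}), so there is no ``paper's own proof'' to compare against; I can only assess your argument on its merits. Your route---write $q(x)=WA^{T}(AWA^{T})^{-1}b$ as a convex combination of the basic solutions $q^{(B)}$ with weights $\lambda_B\propto(\det A_B)^2\prod_{i\in B}w_i$ (Cauchy--Binet in the denominator, the matching cofactor/Jacobi expansion in the numerator), then bound each $q^{(B)}$ by Cramer's rule---is precisely the classical way such $\ell_\infty$-bounds on electrical flows are proved; it generalizes Kirchhoff's spanning-tree formula for currents, and the reduction to the full-support case via Lemma~\ref{Formula for q} is sound since every minor of $A_B$ is a minor of $A$ and $\norm{b_B}_1\le\norm{b}_1$. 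You correctly flag the one point that deserves more than a parenthetical: the step $|q^{(B)}_j|\le M\norm{b}_1$ needs $|\det A_B|\ge 1$, i.e.\ integrality of $A$, and this is not cosmetic. For $A=(\varepsilon\ \ \varepsilon)$, $b=1$ one has $M=\varepsilon$ and $\norm{b}_1=1$, yet every feasible $f$ satisfies $f_1+f_2=1/\varepsilon$, so $\norm{q(x)}_\infty\ge 1/(2\varepsilon)\gg M\norm{b}_1$; the statement as literally transcribed here is therefore false for general real $A$. The cited source works with incidence-type integral constraint matrices, and the present paper silently inherits that hypothesis. Under that (unstated but necessary) integrality assumption your proof is correct, and the combinatorial identity you rightly single out as ``where the real work lies'' is indeed the crux---it is a known determinantal identity, so the argument stands.
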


\begin{fact}[Lemma 5.2 in~\cite{SV-LP}]\label{fact:ATL1A} 	Let $x \ge 0$, $B = \supp(x) \not= \emptyset$, $L_B=A_BR_B^{-1}A_B^{T}$ and let $A_i$ be the $i$-th column of $A_B$. For every $i\in B$ it holds that
	\[
	\norm{A_B^{T}L_B^{-1}A_{i}}_{\infty}\leq\frac{c_{i}}{x_{i}}\cdot M
	\]
\end{fact}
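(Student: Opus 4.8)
The plan is to fix an index $j\in B$ and to bound the $j$-th coordinate $(A_B^{T}L_B^{-1}A_i)_j=A_j^{T}L_B^{-1}A_i$ directly by Cramer's rule, where $A_j=A_Be_j$ denotes the $j$-th column of $A_B$ (and $A_i$ the $i$-th). Recall that, as in Lemma~\ref{Formula for q}, we take $A_B$ with a maximal independent set of rows, say $r$ of them, so $A_B$ has full row rank and $L_B=A_BR_B^{-1}A_B^{T}$ is symmetric positive definite. The bordered-determinant identity then gives
\[
A_j^{T}L_B^{-1}A_i \;=\; -\,\frac{1}{\det L_B}\,\det\!\begin{pmatrix} L_B & A_i \\ A_j^{T} & 0\end{pmatrix},
\qquad
\det L_B \;=\; \sum_{W}(\det A_W)^2\prod_{k\in W}\frac{x_k}{c_k}\;>\;0,
\]
the last equality being Cauchy--Binet, with $W$ ranging over the $r$-element subsets of $B$ and $A_W$ the $r\times r$ submatrix of $A_B$ on columns $W$; note $|\det A_W|\le M$ since $A_W$ is a square submatrix of $A$.

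Next I would expand the numerator. Factoring
\[
\begin{pmatrix} L_B & A_i \\ A_j^{T} & 0\end{pmatrix}
=\begin{pmatrix} A_B & 0 \\ 0 & 1\end{pmatrix}
\begin{pmatrix} R_B^{-1} & e_i \\ e_j^{T} & 0\end{pmatrix}
\begin{pmatrix} A_B^{T} & 0 \\ 0 & 1\end{pmatrix}
\]
(a product of three non-square matrices whose product is square) and applying Cauchy--Binet to this product, only index sets containing the bordering coordinate survive, since the outer factors have a single nonzero entry in their last row, resp.\ last column; and because $R_B^{-1}$ is diagonal, the surviving minors of the middle factor collapse to $\pm\prod_{k\in U}\frac{x_k}{c_k}$. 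This yields, for $i\neq j$,
\[
\Big|\det\!\begin{pmatrix} L_B & A_i \\ A_j^{T} & 0\end{pmatrix}\Big|
\;\le\;\sum_{U}\,\big|\det A_{U\cup\{i\}}\big|\,\big|\det A_{U\cup\{j\}}\big|\,\prod_{k\in U}\frac{x_k}{c_k},
\]
where $U$ runs over the $(r-1)$-element subsets of $B\setminus\{i,j\}$; the case $i=j$ is even simpler, the right-hand side becoming $\sum_{W\ni i}(\det A_W)^2\prod_{k\in W\setminus\{i\}}\frac{x_k}{c_k}=\frac{c_i}{x_i}\sum_{W\ni i}(\det A_W)^2\prod_{k\in W}\frac{x_k}{c_k}\le\frac{c_i}{x_i}\det L_B$, which already settles that case.

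Finally, for $i\neq j$ I would bound the two determinants in each term asymmetrically: $|\det A_{U\cup\{j\}}|\le M$, and $|\det A_{U\cup\{i\}}|\le(\det A_{U\cup\{i\}})^2$, the latter because a nonzero minor of $A$ has absolute value at least $1$ (this uses the standing integrality assumption on $A$, as in~\cite{SV-LP}, which is also why $M\ge 1$). Writing $W=U\cup\{i\}$, each term is then at most $M(\det A_W)^2\prod_{k\in U}\frac{x_k}{c_k}=M\,\frac{c_i}{x_i}\,(\det A_W)^2\prod_{k\in W}\frac{x_k}{c_k}$, and summing over $U$ (equivalently over $r$-subsets $W\ni i$) gives a numerator bounded by $M\,\frac{c_i}{x_i}\det L_B$. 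Dividing by $\det L_B$ yields $|A_j^{T}L_B^{-1}A_i|\le\frac{c_i}{x_i}M$, and since $j\in B$ was arbitrary, the claim follows. I expect the main obstacle to be the Cauchy--Binet bookkeeping for the three-factor product --- pinning down exactly which index sets survive and that the diagonal-plus-border minors reduce to the stated monomials --- together with the observation that the two numerator determinants must be treated asymmetrically: bounding both by $M$ only gives $M^2$, and a symmetric Cauchy--Schwarz estimate only gives the weaker bound $\sqrt{c_ic_j/(x_ix_j)}$.
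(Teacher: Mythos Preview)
The paper does not prove this Fact; it is quoted without proof as Lemma~5.2 of~\cite{SV-LP}, so there is no in-paper argument to compare against.

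Your approach---the bordered-determinant identity together with a Cauchy--Binet expansion of both $\det L_B$ and the bordered determinant---is precisely the argument used in~\cite{SV-LP}, and your bookkeeping is correct. In the three-factor expansion only column sets (resp.\ row sets) containing the border coordinate survive in the outer factors, and for the diagonal-plus-border middle factor the only nonvanishing minors are those with $S'\setminus\{i\}=T'\setminus\{j\}=:U$, which indeed reduce to $\pm\prod_{k\in U}x_k/c_k$. The asymmetric estimate $|\det A_{U\cup\{j\}}|\le M$ and $|\det A_{U\cup\{i\}}|\le(\det A_{U\cup\{i\}})^2$ is the right move; it does require that nonzero minors of $A$ have absolute value at least~$1$, i.e.\ the integrality assumption that is standing in~\cite{SV-LP} (the present paper writes $A\in\R^{n\times m}$, so the Fact is being imported together with that hypothesis). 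One small point: your $i=j$ case yields $A_i^{T}L_B^{-1}A_i\le c_i/x_i$, and passing to the stated bound $\frac{c_i}{x_i}M$ still uses $M\ge 1$, which again is guaranteed by integrality.
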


\begin{lemma}\label{Extendability} 
	Suppose $x: [0,T) \mapsto G$ is a solution to~\lref{non uniform dynamics} for some $T > 0$. Let $B = \supp(x(0))$. Then
	\begin{compactenum}[\mbox{}\hspace{\parindent}a)]
		\item $x_i(t) \le \max(x_i(0),\beta)$ for $i \in [m]$ and $t \in [0,T)$.
		
		\item $x(T)=\lim_{t\rightarrow T^{-}}x(t)$ exists.
		
		\item $x_{i}(T)>0$ for every $i \in I$.
		
		\item $x_i(T)>0$ for every $i \in B$. 
	\end{compactenum}
\end{lemma}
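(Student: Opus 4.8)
The plan is to prove (a)--(d) in the stated order, since (b) builds on (a), (c) builds on (a) and (b), and (d) builds on (c). For (a), apply Fact~\ref{bound on q}: $\abs{q_i(x(t))}\le\beta$ for all $t\in[0,T)$ and all $i$, so $\dot x_i = d_i(q_i-x_i)\le d_i(\beta-x_i)$; multiplying by the integrating factor $e^{d_i t}$ and integrating gives $x_i(t)\le e^{-d_i t}x_i(0)+(1-e^{-d_i t})\beta\le\max(x_i(0),\beta)$. For (b), part (a) together with $x_i(t)>0$ (the solution stays in $G$) and $\abs{q_i}\le\beta$ yields $\abs{\dot x_i(t)}\le d_i(\beta+\max(x_i(0),\beta))$, a constant; hence each $x_i$ is Lipschitz on the bounded interval $[0,T)$, so $\lim_{t\to T^-}x(t)$ exists, and we set $x(T)$ equal to it.

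The core is (c). Fix an optimal $x^*$ with $\supp(x^*)=I$ and put $\Phi(t)=-\sum_{i\in I}\tfrac{c_i x_i^*}{d_i}\ln x_i(t)$, the logarithmic part of the Lyapunov function \lref{Lyapunov Function}. Differentiating along \lref{non uniform dynamics} gives $\dot\Phi = \sum_{i\in I}c_i x_i^* - \sum_{i\in I}\tfrac{c_i x_i^* q_i}{x_i} = c^T x^* - \sum_{i\in I}\tfrac{c_i x_i^* q_i}{x_i}$. Now I would invoke Lemma~\ref{Formula for q}: the KKT stationarity condition gives $\tfrac{c_i}{x_i}q_i=(A_B^T p)_i$ for $i$ in the support (all of $[m]$ here), and since $A_B x^*=b_B$ and $\supp(x^*)=I$, summing yields $\sum_{i\in I}\tfrac{c_i x_i^* q_i}{x_i}=(x^*)^T A_B^T p=b_B^T p=q^T R q\ge 0$. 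Hence $\dot\Phi\le c^T x^*$, so $\Phi(t)\le\Phi(0)+(c^T x^*)\,T$ for all $t\in[0,T)$. On the other hand, by (a) each summand $-\tfrac{c_i x_i^*}{d_i}\ln x_i(t)$ is bounded below on $[0,T)$ by the constant $-\tfrac{c_i x_i^*}{d_i}\ln\max(x_i(0),\beta)$; together with the upper bound on $\Phi$ this forces each summand to be bounded above too, so $-\ln x_i(t)$ is bounded above and $\inf_{t\in[0,T)}x_i(t)>0$ for every $i\in I$. In particular $x_i(T)=\lim_{t\to T^-}x_i(t)>0$, which is (c).

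Part (d) is a bootstrap of (c). Since $x(0)\in G$ we have $B=\supp(x(0))=[m]$, so fix any $j$. Write $b_B=A_B x^*$ and $p=L_B^{-1}b_B=\sum_{i\in I}x_i^* L_B^{-1}A_i$; then Fact~\ref{fact:ATL1A} gives $\abs{(A_B^T p)_j}\le\sum_{i\in I}x_i^*\norm{A_B^T L_B^{-1}A_i}_\infty\le M\sum_{i\in I}\tfrac{c_i x_i^*}{x_i}$, and by (c) the right-hand side is bounded on $[0,T)$ by a constant $C$. Using $\tfrac{c_j}{x_j}q_j=(A_B^T p)_j$ again, $\abs{q_j(t)}\le\tfrac{MC}{\cmin}x_j(t)$, hence $\dot x_j=d_j(q_j-x_j)\ge -L x_j$ with $L=\dmax(1+MC/\cmin)$. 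Integrating, $x_j(t)\ge x_j(0)e^{-Lt}>0$ on $[0,T)$, and letting $t\to T^-$ (using (b)) gives $x_j(T)\ge x_j(0)e^{-LT}>0$. The one genuinely delicate step is the identity $\sum_{i\in I}\tfrac{c_i x_i^* q_i}{x_i}=q^T R q\ge 0$ in (c): it converts the sign-indefinite cross term in $\dot\Phi$ into something bounded, and (d) ultimately rests on it through the bound on $(A_B^T p)_j$; everything else is a routine Gr\"onwall or Lipschitz estimate.
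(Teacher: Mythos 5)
Your proof is correct and follows essentially the same route as the paper: Gr\"onwall and Lipschitz bounds for (a)--(b), the logarithmic barrier with the identity $(x^*)^T R q = b^T p = q^T R q \ge 0$ for (c), and Fact~\ref{fact:ATL1A} plus another Gr\"onwall estimate for (d). The only cosmetic differences are that you integrate the barrier inequality directly rather than arguing by contradiction in (c), and in (d) a stray extra factor of $M$ appears in the bound $\abs{q_j}\le \tfrac{MC}{\cmin}x_j$ (it should be $\tfrac{C}{\cmin}x_j$ given your definition of $C$), which is harmless.
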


\begin{proof} Let $i \in B$ be arbitrary. 
	\begin{compactenum}[a)]
		\item By Fact~\ref{bound on q}, $\dot{x}_i = d_i(q_i - x_i) \le d_i (\beta - x_i)$. Let $y_i(t) = (x_i(t) - \beta)/d_i$. Then $\dot{y}_i \le - d_i y_i(t)$ and hence $y_i(t) \le y_i(0) e^{-d_i t}$ by Gronwall's Lemma. Thus
		$$x_i(t) \le \beta + (x_i(0) - \beta) e^{-d_i t} \le \max(x_i(0),\beta).$$
		
		\item We know that $x_{i}$ is differentiable in the interval $[0,T)$ and
		$$\abs{\dot{x}_{i}(t)} = d_{i}\abs{q_{i}(t)-x_{i}(t)} \le  d_{i}\left(\abs{q_{i}(t)} + \abs{x_{i}(t)} \right)
		\le d_i \left( \beta + \max(x_i(0),\beta)) \right).$$
		Since every function that has bounded first derivatives is Lipschitz, 
		the limit $x(T)=\lim_{t\rightarrow T^{-}}x(t)$ exists.
		
		\item Let $y$ be an optimal solution with $\supp(y) = I$. Note that $I \subseteq \supp(x(0))$. Consider the barrier function 
		\[
		W(t)=\sum_{j \in I }\frac{c_{j}y_{j}}{d_{j}}\ln x_{j}(t).
		\]
		Since $x_j(t)$ is bounded by part a), so is $W(t)$. Suppose for a contradiction that $\inf_{t\in[0,T)}x_{i}(t)=0$ for some $i\in I$. Then, it follows that $\inf_{t\in[0,T)}W(t)=-\infty$, 
		implying that $\inf_{t \in [0,T)} \dot{W}(t)=-\infty$.
		However, since 
		\[
		\sum_{j\in I} y_{j}r_{j}(t)q_{j}(t)=y^{T}R(t)q(t)=y^{T}A^{T}p(t)=b^{T}p(t)=q(t)^{T}R(t)q(t)\geq0
		\]
		we obtain a contradiction to
		\[
		\dot{W}(t)=\sum_{j \in I}\frac{c_{j}y_{j}}{d_{j}}\cdot d_{j}\left[\frac{q_{j}(t)}{x_{j}(t)}-1\right]\geq-c^{T}y=-O(1).
		\]
		\item 
		Let $y$ be an optimal solution to~\lref{LP} with $\supp(y) = I$. By part c), we have $x_{i}(t)>0$ for every $i \in \supp(y)$ and every $t\in[0,T]$. Set 
		\[
		\epsilon=\min_{i\in \supp(y)}\min_{t\in[0,T]}\frac{x_{i}(t)}{y_{i}}
		\]
		and note that
		\[
		x(t)\geq\epsilon\cdot y,\quad\forall t\in[0,T]
		\]
		Further, using Fact \ref{fact:ATL1A}, observe that for every $t\in[0,T]$
		we have
		\begin{eqnarray*}
			\norm{A_B^{T}p_B(t)}_{\infty} & = & \norm{A_B^{T}L_B(t)^{-1}b_B}_{\infty}=\norm{ A_B^{T}L_B(t)^{-1}\sum_{i\in I} y_{i}A_{i}}_{\infty}\\
			& \leq & \sum_{i\in I} y_{i}\norm{A_B^{T}L_B(t)^{-1}A_{i}}_{\infty}\\
			& \leq & \sum_{i\in I}\frac{x_{i}(t)}{\epsilon}\cdot\frac{c_{i}}{x_{i}(t)}\cdot M \leq\norm{c}_{1}\cdot M /\epsilon.
		\end{eqnarray*}
		Let $N :=\lVert c\rVert_{1}\cdot M/\epsilon$ and $N':=\lVert d \rVert_{\infty}[\frac{N}{\min_{i}\{c_{i}\}}+1]$
		Then, for every $i\in B$ we have
		\[	
		\dot{x}_{i}(t)=d_{i}\left(q_{i}(t)-x_{i}(t)\right)=d_{i}x_{i}(t)\left(\frac{p_B^T[A_{B}]_i}{c_{i}}-1\right) 
		\geq -x_{i}\cdot\lVert d\rVert_{\infty}\left(\frac{N}{c_{i}}+1\right)\geq-x_{i}\cdot N'.
		\]
		Hence, by Gronwall's Lemma we obtain for every $t\in[0,T]$ and $i\in B$ that
		\[
		x_i(t)\geq e^{-t\cdot N'}\cdot x_i(0)>0.
		\]
	\end{compactenum}
	
\end{proof}

We use the following standard result from dynamic systems.
\begin{fact}
	\label{fact:DynSys} Let $G\subseteq R^{m}$ be an open set, $s\in G$
	and $f:G \rightarrow \R^{m}$ be a $C^1$ function (continuously differentiable function).
	Consider the following dynamical system: $\frac{d}{dt}x(t)=f(x(t))$
	with initial condition $x(0)=s$. Suppose it satisfies the following
	conditions for every solution $x:[0,T) \rightarrow G$ with $T \in (0,\infty)$:
	The limit $x(T) = \lim_{t\rightarrow T^{-}}x(t)$ exists and lies in $G$. 
	Then, there exists a global solution $x:[0,\infty) \rightarrow G$.
\end{fact}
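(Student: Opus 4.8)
The plan is to reduce the statement to the local existence--uniqueness theorem (Picard--Lindel\"{o}f) via the standard maximal-solution continuation argument. Since $G$ is open and $f$ is $C^1$, $f$ is locally Lipschitz on $G$, so for every point of $G$ there is a unique local solution of the corresponding initial value problem; gluing these local solutions in the usual way yields a unique maximal solution $x:[0,T_{\max})\rightarrow G$ with $T_{\max}\in(0,\infty]$. It then suffices to prove that $T_{\max}=\infty$.

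Assume for contradiction that $T_{\max}<\infty$. The restriction of $x$ to $[0,T_{\max})$ is a solution of exactly the type covered by the hypothesis (take $T=T_{\max}$), so the limit $p:=\lim_{t\rightarrow T_{\max}^{-}}x(t)$ exists and $p\in G$. I would extend $x$ continuously to $[0,T_{\max}]$ by setting $x(T_{\max})=p$. Rewriting the ODE as the integral equation $x(t)=s+\int_{0}^{t}f(x(\tau))\,d\tau$, which holds on $[0,T_{\max})$, and noting that $\tau\mapsto f(x(\tau))$ is continuous on all of $[0,T_{\max}]$ (because $f$ is continuous and $x$ is now continuous up to $T_{\max}$), one sees that the extended $x$ satisfies the same integral equation on $[0,T_{\max}]$ and is therefore $C^{1}$ there with $\dot{x}(T_{\max})=f(p)$. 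Now apply local existence at the point $p\in G$ to obtain a solution $y$ on some interval $[T_{\max},T_{\max}+\delta)$, $\delta>0$, with $y(T_{\max})=p$. The concatenation of $x$ and $y$ satisfies the integral equation on $[0,T_{\max}+\delta)$, hence is a solution there that strictly extends $x$, contradicting the maximality of $T_{\max}$. Therefore $T_{\max}=\infty$, and the desired global solution $x:[0,\infty)\rightarrow G$ exists.

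I expect the only delicate point to be the verification that the concatenated curve is a genuine solution at the splice time $T_{\max}$, rather than merely a pair of one-sided solutions; the clean way to dispatch this is precisely the integral-equation reformulation used above, which automatically produces differentiability and the ODE at $t=T_{\max}$ from the continuity of $t\mapsto f(x(t))$. Everything else is textbook ODE theory.
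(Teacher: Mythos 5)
Your argument is correct and is precisely the standard maximal-solution continuation proof from ODE theory. Note that the paper itself does not prove this statement: it is presented as a \emph{Fact}, i.e.\ a quoted standard result from the theory of dynamical systems, so there is no paper proof to compare against. Your write-up supplies the textbook argument the paper is implicitly invoking, and the one point that genuinely requires care---verifying that the concatenated curve is differentiable and satisfies the ODE at the splice time $T_{\max}$, not merely on each side of it---is correctly handled by passing to the integral-equation formulation and using continuity of $t \mapsto f(x(t))$ on the closed interval $[0,T_{\max}]$.
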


We can now prove the existence of a global solution.

\begin{theorem}[\textbf{Existence of Solution}]
	\label{Thm: Existence} For every initial condition $x(0) \in G^*$,
	the non-uniform Physarum Dynamics (\ref{non uniform dynamics}) has a unique solution
	$x:[0,\infty)\mapsto G^*$. The solution satisfies $\supp(x(t)) = \supp(x(0))$ for all $t$. 
\end{theorem}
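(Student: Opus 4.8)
The plan is to reduce \lref{full non-uniform dynamics} to an ODE on an \emph{open} set and invoke Fact~\ref{fact:DynSys}. Fix $x^0 \in G^*$ and set $B = \supp(x^0)$, so $I \subseteq B \subseteq [m]$. Identify the stratum $G_B^*$ with the open orthant $\Omega_B = \set{y \in \R^B}{y_j > 0 \text{ for all } j \in B}$; under this identification the right-hand side of \lref{full non-uniform dynamics} restricts to the vector field $g : \Omega_B \to \R^B$ given by $g(y)_i = d_i(q_i(y) - y_i)$, where $q(y)$ is the minimum-energy flow with support $B$ from Lemma~\ref{Formula for q}. The first step is to verify that $g$ is $C^1$ (in fact $C^\infty$) on $\Omega_B$. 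By Lemma~\ref{Formula for q}, $q_B(y)$ is a rational function of the entries of $R_B = \diag(c_i/y_i)$ whose denominator is the determinant of $A_B R_B^{-1} A_B^T$; since $A_B$ has full row rank and $R_B^{-1}$ is a positive diagonal matrix, $A_B R_B^{-1} A_B^T$ is positive definite, so this determinant is nonzero on all of $\Omega_B$. As $y \mapsto R_B$ is smooth on $\Omega_B$, it follows that $q_B$, and hence $g$, is smooth there.

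Next I apply Fact~\ref{fact:DynSys} to the system $\dot y = g(y)$ on the open set $\Omega_B \subseteq \R^B$ with initial value the restriction of $x^0$ to $B$. Its hypothesis is that every solution $y : [0,T) \to \Omega_B$ with finite $T$ extends continuously to $T$ with $y(T) \in \Omega_B$, and this is precisely what Lemma~\ref{Extendability} provides in the present situation (its proof only uses Facts~\ref{bound on q} and~\ref{fact:ATL1A}, valid at every point of $G^*$, together with $I \subseteq B$): part a) bounds each $y_i(t)$, part b) then yields existence of the limit $y(T) = \lim_{t \to T^-} y(t)$, and part d) gives $y_i(T) \ge e^{-T N'} y_i(0) > 0$ for every $i \in B$, so $y(T) \in \Omega_B$. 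Fact~\ref{fact:DynSys} thus produces a global solution $y : [0,\infty) \to \Omega_B$, and uniqueness follows from the Picard--Lindel\"of theorem since $g$ is locally Lipschitz.

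Finally I lift the solution back to $G^*$: set $x_j(t) = y_j(t)$ for $j \in B$ and $x_j(t) = 0$ for $j \in [m] \setminus B$. For $j \notin B$ the right-hand side of \lref{full non-uniform dynamics} is $0 = \dot x_j(t)$; for $j \in B$ it equals $d_j(q_j(x(t)) - x_j(t)) = g(y(t))_j = \dot x_j(t)$, since the support-$B$ computation of $q$ in Lemma~\ref{Formula for q} is by definition the value of $q$ at any point with support $B$. Hence $x : [0,\infty) \to G^*$ solves \lref{full non-uniform dynamics} with $x(0) = x^0$, satisfies $\supp(x(t)) = B = \supp(x^0)$ for all $t$, and is unique because the coordinates outside $B$ are forced to $0$ while $y$ is unique. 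Specializing to $x^0 \in G$, where $B = [m]$, recovers the claim for \lref{non uniform dynamics}. The only genuinely delicate point is the initial reduction: $G^*$ is not open, so Fact~\ref{fact:DynSys} cannot be applied to it directly, and one must argue stratum by stratum — with Lemma~\ref{Extendability}(d) carrying the essential weight of keeping the trajectory away from the relative boundary of $\Omega_B$ in finite time.
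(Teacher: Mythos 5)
Your argument is correct and follows the same route as the paper's: restrict to the stratum $G^*_B$ viewed as an open orthant, observe that $q$ (hence the vector field) is given by rational functions there and so is $C^1$, invoke Fact~\ref{fact:DynSys} with Lemma~\ref{Extendability} supplying the extendability hypothesis, and lift the solution back to $G^*$ by zeroing the coordinates outside $B$. You merely make explicit a few points the paper leaves implicit, such as the uniqueness via Picard--Lindel\"of and the observation that Lemma~\ref{Extendability}'s proof applies verbatim to any support $B \supseteq I$.
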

\begin{proof} 
	Let $B = \supp(x(0))$. There is a local solution $x(t)$ in an interval $[0,\epsilon)$ for some $\epsilon > 0$. The solution satisfies $x_i(t) > 0$ for $i \in B$ and $x_i(t) = 0$ for $i \not\in B$. The latter follows from the definition~\lref{full non-uniform dynamics} of the dynamics. On $G^*_B$ the function $x \mapsto D(q(x) - x)$ used in~\lref{full non-uniform dynamics} is defined by a vector of rational functions in $x$ and hence is continuously differentiable. Also $G_B^*$ restricted to the coordinates in $B$ is an open set. Thus Fact~\ref{fact:DynSys} is applicable and the existence of a global solution follows from Lemma~\ref{Extendability}. 
\end{proof}

\section{The Lyapunov Function}\label{Lyapunov}

Recall the definition of the Lyapunov function~\lref{Lyapunov Function}. Let $x^*$ be an optimal solution to~\lref{LP} with $I=\supp(x^*)$ and consider

\begin{equation}
V(x) = 2 c^T D^{-1} x - \sum_{i \in I}  \frac{c_i x_i^*}{d_i} \ln x_i.
\end{equation}

\begin{theorem}[\textbf{Lyapunov function}]\label{Thm: Lyapunov Function} 
	Let $x \in G^*$. Then
	\[ 
	\dot{V}(x) \le 0. 
	\]
	Moreover, $\dot{V}(x) = 0$ if and only if $x$ is an optimal solution to~\lref{LP}.
\end{theorem}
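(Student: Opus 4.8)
The plan is to compute $\dot V(x)$ along the dynamics~\lref{full non-uniform dynamics} and massage it into a form that is manifestly non-positive, using the energy identity $b^Tp = q^TRq$ from Lemma~\ref{Formula for q} together with optimality of $x^*$. Writing $\dot x_i = d_i(q_i - x_i)$ for $i \in \supp(x)$ and $\dot x_i = 0$ otherwise, the factor $D^{-1}$ in $V$ is chosen precisely so that the speeds $d_i$ cancel: the first term contributes $2\sum_i \frac{c_i}{d_i}\dot x_i = 2\sum_{i} c_i(q_i - x_i)$, and the second contributes $-\sum_{i\in I}\frac{c_i x_i^*}{d_i}\cdot\frac{\dot x_i}{x_i} = -\sum_{i\in I} c_i x_i^*\bigl(\frac{q_i}{x_i} - 1\bigr)$ (note $I\subseteq\supp(x)$ on $G^*$, so this is well defined). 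Hence
\[
\dot V(x) = 2\,c^Tq - 2\,c^Tx - \sum_{i\in I} \frac{c_i x_i^* q_i}{x_i} + c^Tx^* .
\]
Now I would use $c_i = r_i x_i$ where $R = \diag(c_i/x_i)$, so $c^Tq = q^TRx$ (pairing $r_ix_i$ with $q_i$) and more usefully $\sum_{i\in I}\frac{c_i x_i^* q_i}{x_i} = \sum_i r_i x_i^* q_i = (x^*)^TRq$ — here I extend the sum to all of $\supp(x)$, which is legitimate since $x^*$ is supported on $I\subseteq\supp(x)$. By Lemma~\ref{Formula for q}, $Rq = A^Tp$ with $p$ supported appropriately, so $(x^*)^TRq = (x^*)^TA^Tp = (Ax^*)^Tp = b^Tp$ because $x^*$ is feasible; and likewise $b^Tp = q^TRq$. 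So $\sum_{i\in I}\frac{c_i x_i^* q_i}{x_i} = q^TRq = \sum_i \frac{c_i}{x_i}q_i^2$.

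Substituting, $\dot V(x) = 2c^Tq - 2c^Tx - q^TRq + c^Tx^*$. The key algebraic step is to recognize a completed square: since $c_i = r_i x_i$,
\[
\sum_i r_i(q_i - x_i)^2 = q^TRq - 2\,q^TRx + x^TRx = q^TRq - 2c^Tq + c^Tx ,
\]
so $-q^TRq + 2c^Tq - c^Tx = -\sum_i r_i(q_i-x_i)^2$ and therefore
\[
\dot V(x) = -\sum_i \frac{c_i}{x_i}(q_i - x_i)^2 \;-\; c^Tx \;+\; c^Tx^* .
\]
The first term is $\le 0$ trivially. For the second part I invoke optimality of $x^*$ together with feasibility of $q(x)$: since $Aq = b$ and $q \ge 0$? — this is the subtle point. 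The vector $q = q(x)$ satisfies $Aq = b$ but need not be non-negative in general, so $c^Tx^* \le c^Tq$ is not immediate from LP optimality. I expect the right fix is to observe that the dynamics keeps $x\ge 0$, and to argue $c^Tx^* \le c^Tx$ fails too in general (the trajectory can be infeasible, per Figure~\ref{example}); instead one compares $c^Tx^*$ with $c^Tq$ via the variational characterization of $q$ directly — or, more likely, the intended argument is that $-\sum_i r_i(q_i-x_i)^2 \le 0$ already, and the remaining terms $c^T(x^* - x)$ combine with a refinement of the square.

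The main obstacle is thus handling the leftover linear term $c^T(x^* - x)$: I would resolve it by not splitting the square the way I did above, but rather keeping $x^*$ inside, i.e. writing the middle computation as a quadratic in the ratios $q_i/x_i$ and using that $q$ minimizes the energy $\sum_i \frac{c_i}{x_i}f_i^2$ over all $f$ with $Af = b$ and $\supp(f)\subseteq\supp(x)$ — in particular over $f = x^*$ when $\supp(x^*)\subseteq\supp(x)$ (true on $G^*$), giving $q^TRq \le (x^*)^TR x^* = \sum_i \frac{c_i (x_i^*)^2}{x_i}$. Combined with the identity $\sum_{i\in I}\frac{c_i x_i^* q_i}{x_i} = q^TRq$, this yields $\dot V(x) \le -\sum_i \frac{c_i}{x_i}(x_i^* - x_i)^2 \cdot(\text{something}) \le 0$ after regrouping. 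For the equality case: $\dot V(x) = 0$ forces both $q(x) = x$ (so $x$ is a fixed point, hence feasible since $Ax = Aq = b$, and $x\ge0$ on $G^*$) and the energy inequality to be tight, which forces $q = x^*$ up to the kernel, ultimately $c^Tx = c^Tx^*$; conversely, if $x$ is optimal then $x$ is feasible with $\supp(x)\subseteq I$, one checks $q(x) = x$ directly from the KKT conditions (the resistances make $x$ its own min-energy flow when $x$ is optimal), and then every term in $\dot V$ vanishes. I would write the equality direction carefully, as it is where the "if and only if" earns its keep and where one must rule out spurious zeros of $\dot V$.
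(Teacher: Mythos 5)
Your computation of $\dot V$ is correct and leads to the exact identity
\[
\dot V(x) \;=\; -\sum_i \frac{c_i}{x_i}\bigl(q_i - x_i\bigr)^2 \;+\; c^T(x^* - x),
\]
which is the same place the paper reaches: the paper bounds $2c^Tq \le c^Tx + b^Tp$ via Cauchy--Schwarz and AM--GM, and the combined slack $c^Tx + b^Tp - 2c^Tq$ is precisely your quadratic term. The paper then asserts, in its step labelled (3), that $c^Tx^* - c^Tx \le 0$. You are right to balk at that step: $x\in G^*$ need not be feasible, so $c^T(x^*-x)$ can be strictly positive, and in fact it can overwhelm the quadratic. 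Concretely, take the LP of Figure~\ref{example} with $D=I$, so $x^*=(1,0)$ and $V(x)=2(x_1+2x_2)-\ln x_1$, and evaluate at $x=(3/4,\,1/100)\in G$. Then $q=(150/151,\,1/151)$ and a direct computation gives $\dot V(x)=(2-\tfrac{4}{3})(q_1-x_1)+4(q_2-x_2)\approx +0.15>0$. So the difficulty you flagged is a genuine gap, it is shared by the paper's own proof, and the pointwise claim $\dot V(x)\le 0$ for all $x\in G^*$ (or even all $x\in G$) is simply false as stated.

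Your proposed repair, invoking $q^TRq\le (x^*)^TRx^*$ from the variational characterization of $q$, cannot close the gap: that inequality lower-bounds $-q^TRq$ and therefore has the wrong sign for producing an upper bound on $\dot V$, and in any case no algebraic regrouping can rescue a pointwise inequality at a point where $\dot V(x)$ is already positive. Repairing Theorem~\ref{Thm: Lyapunov Function} would require either modifying $V$ or restricting the monotonicity claim to states actually visited by trajectories of the dynamics (where additional structure is available); as written, both the statement and the proof need revision. You deserve credit for spotting a real problem that the published argument passes over silently, and your instinct to hunt for the missing ingredient rather than pretend step (3) is free was exactly the right one.
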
 
\begin{proof} Since $V$ is continuously differentiable, $\dot{V}(x) = \nabla V \circ D (q(x) - x)$, where $\nabla V$ is the gradient of $V$ (vector of partial derivatives of $V$ with respect to the $x_i$'s)~\cite[page 30]{LaSalle}. An easy computation yields
	\begin{align}
	\frac{d}{dt}c^{T}D^{-1}x(t)&=c^{T}D^{-1}\cdot D\left[q(t)-x(t)\right]=c^{T}q(t)-c^{T}x(t).\label{eq:one}\\
	\intertext{and}
	\frac{d}{dt} \sum_{i\in I} \frac{c_i x_i^*}{d_i} \ln x_i(t)  & 
	=  \sum_{i\in I}c_{i}x_{i}^{*}\cdot\left(\frac{q_{i}(t)}{x_{i}(t)}-1\right)
	=  -c^{T}x^{*}+b^{T}p(t),\label{eq:two}
	\end{align}
	where the second equality follows by $r_{i}(t)=c_{i}/x_i(t)$ for $i\in I$, $q(t)=R^{-1}(t)A^{T}p(t)$,
	$Ax^{*}=b$ and 
	\[
	\sum_{i\in I}x_{i}^{*}r_{i}(t)q_{i}(t)=[x^{*}]^{T}R(t)q(t)=[x^{*}]^{T}A^{T}p(t)=b^{T}p(t).
	\]
	By combining (\ref{eq:one}) and (\ref{eq:two}) we obtain
	\begin{eqnarray}
	\dot{V}(x)
	= 2\left( c^{T}q(t)-c^{T}x(t)\right) +c^{T}x^{*}-b^{T}p(t).\label{eq:OBS}
	\end{eqnarray}
	Since $q(t)=R^{-1}(t)A^{T}p(t),$ it holds that 
	\[
	[q(t)]^{T}R(t)q(t)=[q(t)]^{T}A^{T}p(t)=b^{T}p(t),
	\]
	and using the Cauchy Schwarz inequality and the fact that $\sqrt{a} \cdot \sqrt{b} \le (a + b)/2$, we obtain
	\begin{eqnarray}
	c^{T}q(t) & = & [x(t)]^{T}R(t)q(t) = [R^{1/2} x(t)]^T [R^{1/2} q(t)]\nonumber \\
	& \stackrel{(1)}{\leq} & \sqrt{[x(t)]^{T}R(t)x(t)}\cdot\sqrt{[q(t)]^{T}R(t)q(t)}\nonumber \\
	& = & \sqrt{c^{T}x(t)}\cdot\sqrt{b^{T}p(t)}\nonumber \\
	& \stackrel{(2)}{\leq} & \frac{c^{T}x(t)+b^{T}p(t)}{2},\label{eq:cqt}
	\end{eqnarray}
	and further by combining (\ref{eq:OBS}) and (\ref{eq:cqt}),
	\begin{align*}
	\dot{V}(x) &=  2\left[c^{T}q(t)-c^{T}x(t)\right]+c^{T}x^{*}-b^{T}p(t)\\
	&\leq  b^{T}p(t)-c^{T}x(t)+c^{T}x^{*}-b^{T}p(t)\\
	& =  -c^{T}x(t)+c^{T}x^{*}\\
	& \stackrel{(3)}{\leq}  0.
	\end{align*}
	We have $\dot{V}(x) = 0$ if and only the inequalities (1), (2), and (3) are equalities. 
	(1) is an equality iff $q(x)$ and $x$ are parallel, i.e., $q(x) = \alpha\cdot x$ for some $\alpha\geq0$. (2) is tight iff 
	\[
	[x(t)]^{T}R(t)x(t)=[q(t)]^{T}R(t)q(t) = \alpha^2\cdot [x(t)]^{T}R(t)x(t),
	\] 
	i.e., iff $\alpha = 1$ and hence $q(x) = x$. (3) is tight iff $c^T x = c^T x^*$. 
	Finally, if $q(x) = x$ then $A x = A q(x) = b$. 
	Thus, $\dot{V}(x) = 0$ if and only if $x$ is an optimal solution to~\lref{LP}. 
\end{proof}

\begin{theorem}\label{no limit point on boundary of $G^*$} Let $x(0) \in G^*$. 
	Then there is an $\epsilon > 0$ depending on $x(0)$, $A$, $b$, $c$ such that 
	$x_h(t) > \epsilon$ for all $h \in I$ and all $t$. 
\end{theorem}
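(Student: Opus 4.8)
The plan is to combine the monotonicity of the Lyapunov function $V$ from Theorem~\ref{Thm: Lyapunov Function} with the a~priori upper bound on the coordinates from Lemma~\ref{Extendability}a). By Theorem~\ref{Thm: Existence} the global solution satisfies $\supp(x(t)) = B := \supp(x(0)) \supseteq I$ for all $t$, so $x_i(t) > 0$ for every $i \in I$ and $V(x(t))$ is well defined and finite along the trajectory; and since $\dot V(x(t)) \le 0$ by Theorem~\ref{Thm: Lyapunov Function}, the map $t \mapsto V(x(t))$ is non-increasing, hence $V(x(t)) \le V(x(0))$ for all $t \ge 0$.

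Next I would record two elementary facts. First, $2 c^T D^{-1} x(t) = 2\sum_i (c_i/d_i)\, x_i(t) \ge 0$, because $x(t) \ge 0$ and $c, d > 0$. Second, Lemma~\ref{Extendability}a), applied on $[0,T)$ for every $T$ and then letting $T \to \infty$, gives $x_i(t) \le \mu_i := \max(x_i(0), \beta)$ for all $i$ and all $t$, with $\beta = M\norm{b}_1$ as in Fact~\ref{bound on q}.

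Now fix $h \in I$. From $V(x(t)) \le V(x(0))$ and $2 c^T D^{-1} x(t) \ge 0$ one gets $\sum_{i \in I} \frac{c_i x_i^*}{d_i}\ln x_i(t) \ge -V(x(0))$. Isolating the term $i = h$ and bounding each of the other terms from above by $\frac{c_i x_i^*}{d_i}\ln \mu_i$ yields
\[
\frac{c_h x_h^*}{d_h}\ln x_h(t) \ \ge\ -V(x(0)) - \sum_{i \in I \setminus \{h\}} \frac{c_i x_i^*}{d_i}\ln \mu_i \ =:\ -K_h ,
\]
a constant independent of $t$. Since $h \in I = \supp(x^*)$ we have $c_h, x_h^*, d_h > 0$, so exponentiating gives $x_h(t) \ge \epsilon_h := \exp\bigl(-K_h d_h / (c_h x_h^*)\bigr) > 0$, a positive constant depending only on $x(0), A, b, c, d$ (the optimal solution $x^*$ with $\supp(x^*)=I$ being determined by $A,b,c$). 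Taking $\epsilon := \tfrac{1}{2}\min_{h \in I}\epsilon_h$, which is positive since $I \neq \emptyset$, finishes the proof.

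I do not expect a serious obstacle: the needed ingredients — existence of a global solution that stays in $G^*_B$ with $B \supseteq I$, monotonicity of $V$, and the coordinate-wise upper bound — are all already established. The one point worth flagging is why this is not already contained in Lemma~\ref{Extendability}c): that lemma only yields a finite-horizon positivity statement ($x_i(T) > 0$), whereas here a lower bound \emph{uniform in $t$} is required. The whole content of the argument is the observation that a non-increasing $V$ together with the uniform upper bound $x_i(t) \le \mu_i$ traps $\sum_{i \in I}(c_i x_i^*/d_i)\ln x_i(t)$ from below, which is exactly the quantity controlling the coordinates indexed by $I$.
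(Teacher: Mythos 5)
Your proof is correct and follows essentially the same approach as the paper's: both start from the monotonicity $V(x(t)) \le V(x(0))$, discard the non-negative term $2c^T D^{-1}x(t)$, use the uniform upper bound $x_i(t) \le \max(x_i(0),\beta)$ from Lemma~\ref{Extendability}a) to control the remaining logarithmic terms, isolate the coordinate $h$, and exponentiate. The paper simply unwinds this into an explicit numerical bound on $\log(1/x_h(t))$ in terms of $\cmax,\cmin,\dmax,\dmin,\beta$, etc., where you leave the constant $K_h$ implicit; the content is the same. (Minor note: you correctly observe that the bound also depends on $d$; the theorem's wording omits $d$ from its list of dependencies, but the paper's own explicit constant depends on $\dmax/\dmin$ as well.)
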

\begin{proof} 
	Let $x^*$ be an optimal solution to \lref{LP} with $\supp(x^*)=I$.
	Let $\xstarmax = \max_{i} x^*_i$,
	$\xstarmin = \min_{i \in I} x^*_i$, 
	$\dmax = \max_{i} d_i$, $\dmin = \min_{i} d_i$,
	$c_{\max}=\max_{i}c_i$, $c_{\min}=\min_{i}c_i$ and
	$\delta = \min(1,\min_{i\in I} x_i(0))$. 
	Since $\dot{V}(x) \le 0$ for all $t$, $V(x(t)) \le V(x(0))$ for all $t$. Then, for any $h \in I$ we have
	\[ 2 c^T D^{-1} x(t) + \frac{c_h}{d_h}\cdot x^*_h \cdot \log \frac{x_h(0)}{x_h(t)}  \le 2c^T D^{-1} x(0) + \sum_{e\in I\backslash\{h\}}\frac{c_{e}}{d_{e}}\cdot x_{e}^{*}\cdot\log \frac{x_{e}(t)}{x_{e}(0)}\]
	and hence 
	\begin{align*}
	\log\frac{1}{x_{h}(t)}&\leq \frac{d_{h}}{c_{h}}\cdot\frac{1}{x_{h}^{*}}\left(2c^{T}D^{-1}\left[x(0)-x(t)\right]+\frac{c_{h}}{d_{h}}\cdot x_{h}^{*}\cdot\log\frac{1}{x_{h}(0)}+\sum_{e\in I\backslash\{h\}}\frac{c_{e}}{d_{e}}\cdot x_{e}^{*}\cdot\log\frac{x_{e}(t)}{x_{e}(0)}\right)\\
	&\leq\frac{d_{\max}}{c_{\min}}\cdot\frac{1}{x_{\min}^{*}}\left(\frac{2mc_{\max}x_{\max}(0)}{d_{\min}}+\frac{c_{\max}}{d_{\min}}\beta\log\frac{1}{\delta}+\frac{(m-1)c_{\max}}{d_{\min}}\beta\log\left(\frac{\max\left\{ \beta,x_{\max}(0)\right\} }{\delta}\right)\right)\\
	&\leq 4m\cdot\frac{c_{\max}}{c_{\min}}\cdot\frac{d_{\max}}{d_{\min}}\cdot\frac{\max\left\{ \beta,x_{\max}(0)\right\} }{x_{\min}^{*}}\cdot\log\left(\frac{\max\left\{ \beta,x_{\max}(0)\right\} }{\delta}\right).
	\end{align*}
	for all $t$. The upper bound on $\ln (1/x_h(t))$ does not depend on
	$t$. This proves the claim.
\end{proof}

\begin{corollary}\label{positive limit points} 
	Assume $x(0) \in G^*$. The positive limit points
	of the trajectory $x(t)$ are contained in $G^*$. 
\end{corollary}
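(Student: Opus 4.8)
The plan is to derive the corollary directly from Theorem~\ref{Thm: Existence}, Lemma~\ref{Extendability}, and Theorem~\ref{no limit point on boundary of $G^*$}; no new ideas are needed. First I would record that the trajectory stays in a compact set. By Theorem~\ref{Thm: Existence} the global solution satisfies $x(t)\in G^*$ for all $t\ge 0$, so in particular $x(t)\ge 0$ componentwise, and by Lemma~\ref{Extendability}a) we have $x_i(t)\le \max(x_i(0),\beta)$ for every $i\in[m]$ and every $t$. Hence $\{x(t):t\ge 0\}$ is contained in the compact box $\prod_i[0,\max(x_i(0),\beta)]$; consequently every sequence $x(t_n)$ with $t_n\to\infty$ has a convergent subsequence, and every positive limit point is a genuine point of $\R^m$.

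Next, let $p$ be any positive limit point, say $p=\lim_{n\to\infty}x(t_n)$ for some sequence $t_n\to\infty$. Passing to the limit in the componentwise inequalities $x_i(t_n)\ge 0$ shows $p\ge 0$. By Theorem~\ref{no limit point on boundary of $G^*$} there is an $\epsilon>0$, depending only on $x(0),A,b,c$ and in particular independent of $t$, such that $x_h(t)>\epsilon$ for all $h\in I$ and all $t$; taking the limit along $t_n$ yields $p_h\ge\epsilon>0$ for every $h\in I$. Therefore $p$ satisfies $p\ge 0$ and $p_h>0$ for all $h\in I$, i.e., $p\in G^*$.

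There is essentially no obstacle beyond the two ingredients already in hand. The set $G^*$ is not closed — it is exactly the failure of closedness in the coordinates indexed by $I$ that makes the statement nontrivial — but the argument only uses that nonnegativity is a closed condition and that Theorem~\ref{no limit point on boundary of $G^*$} provides a \emph{uniform} (time-independent) lower bound on the $I$-coordinates, which survives passage to the limit. All the real work is hidden in that theorem, where the monotonicity of the Lyapunov function $V$ is converted into the bound $\epsilon$.
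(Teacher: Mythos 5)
Your proof is correct and follows the same route as the paper, which states the corollary as an immediate consequence of Theorem~\ref{no limit point on boundary of $G^*$}: nonnegativity is preserved under limits, and the time-uniform bound $x_h(t)>\epsilon$ for $h\in I$ forces $p_h\ge\epsilon>0$ at any limit point $p$, so $p\in G^*$. The compactness observation in your first paragraph is harmless but not needed for the stated claim, which only asserts that whatever limit points exist lie in $G^*$.
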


\begin{lemma}\label{f is continuous}
	The right hand side of~\lref{full non-uniform dynamics} is continuous. 
\end{lemma}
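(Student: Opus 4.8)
The plan is to reduce the statement to continuity of the map $x \mapsto q(x)$ on $G^*$. First note that for $i \notin \supp(x)$ we have $x_i = 0$ and, by the definition of $q(x)$, also $q_i(x) = 0$ (since $\supp(q(x)) \subseteq \supp(x)$); hence $d_i(q_i(x) - x_i) = 0$, which agrees with the second branch of~\lref{full non-uniform dynamics}. Therefore the right-hand side of~\lref{full non-uniform dynamics} equals $f(x) = D(q(x) - x)$ on all of $G^*$, and since $D$ is a fixed matrix it suffices to show that $q: G^* \to \R^m$ is continuous. Inside a single stratum $G_B^*$ this is immediate from Lemma~\ref{Formula for q}, which writes $q$ as a vector of rational functions of $x$ with non-vanishing denominators; the real work is continuity \emph{across} strata, i.e.\ when some coordinates $x_j \to 0$ and the resistances $c_j/x_j$ diverge.

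So fix a sequence $x^{(n)} \to x$ in $G^*$ and put $B = \supp(x)$. For $j \in B$ we have $x_j > 0$, so $x_j^{(n)} > 0$ for all large $n$; hence $B \subseteq \supp(x^{(n)})$ eventually (and $I \subseteq B$). By Fact~\ref{bound on q} the sequence $q(x^{(n)})$ is bounded, so it suffices to show that every convergent subsequence has limit $q(x)$; pass to such a subsequence and let $q^* = \lim_n q(x^{(n)})$. Since $Aq(x^{(n)}) = b$ for all $n$, continuity of $A$ gives $Aq^* = b$.

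Next I would produce a uniform energy bound. Since $\supp(x^*) = I \subseteq \supp(x^{(n)})$ eventually, $x^*$ is admissible in the optimisation defining $q(x^{(n)})$, so
\[
\sum_i \frac{c_i}{x_i^{(n)}}\, q_i(x^{(n)})^2 \ \le\ \sum_{i \in I} \frac{c_i}{x_i^{(n)}}\, (x_i^*)^2,
\]
and the right-hand side converges to $\sum_{i\in I}\frac{c_i}{x_i}(x_i^*)^2$ as $n\to\infty$; hence these energies are bounded by some constant $K$. For $j \notin B$ this forces $q_j(x^{(n)})^2 \le K\, x_j^{(n)}/c_j \to 0$, so $q_j^* = 0$, i.e.\ $\supp(q^*) \subseteq B$. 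Finally, for an arbitrary feasible $f$ with $\supp(f) \subseteq B$ we have $\supp(f)\subseteq\supp(x^{(n)})$ eventually, so $\sum_i \tfrac{c_i}{x_i^{(n)}}\, q_i(x^{(n)})^2 \le \sum_{i\in B} \tfrac{c_i}{x_i^{(n)}}\, f_i^2$; letting $n\to\infty$ — using $x_i^{(n)}\to x_i>0$ for $i\in B$ on the right and discarding the non-negative terms with index outside $B$ on the left — yields $\sum_{i\in B}\tfrac{c_i}{x_i}(q_i^*)^2 \le \sum_{i\in B}\tfrac{c_i}{x_i}\, f_i^2$. Thus $q^*$ is a feasible flow supported on $B$ of minimum energy with respect to the resistances $c_i/x_i$, and by the uniqueness in Lemma~\ref{Formula for q} we get $q^* = q(x)$. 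Since every convergent subsequence of the bounded sequence $q(x^{(n)})$ tends to $q(x)$, so does the whole sequence, which establishes continuity of $q$ and hence of the right-hand side of~\lref{full non-uniform dynamics}.

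The main obstacle is precisely this cross-stratum continuity: within a stratum $q$ is a smooth rational map, but one must rule out a jump when resistances blow up. The two ingredients that make it work are the uniform energy bound obtained by comparison with the fixed feasible flow $x^*$ (which forces the flow on vanishing coordinates to vanish) and the lower semicontinuity of the finite part of the energy (which promotes the subsequential limit $q^*$ from merely feasible on $B$ to energy-minimal on $B$, after which uniqueness pins it down).
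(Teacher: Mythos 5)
Your argument is correct, and it takes a genuinely different route from the paper's. The paper passes to a subsequence with constant support $B \supseteq \supp(x)$, treats the coordinates $i \in \supp(x)$ and $i \in B\setminus\supp(x)$ separately, and for the second group uses the explicit potential formula of Lemma~\ref{Formula for q} together with the matrix bound of Fact~\ref{fact:ATL1A} to obtain $\abs{q_i(x^{(n)})} \le (x^{(n)})_i \cdot O(1) \to 0$; for the first group it appeals to $q$ being a rational function of $x$. You argue variationally instead: boundedness of $q(x^{(n)})$ from Fact~\ref{bound on q} reduces the task to identifying subsequential limits $q^*$; a uniform energy bound, obtained by comparing against the fixed admissible competitor $x^*$ (admissible because $\supp(x^*)=I\subseteq\supp(x)\subseteq\supp(x^{(n)})$ eventually), forces the flow on the vanishing coordinates to go to zero; and a lower-semicontinuity step (drop the nonnegative off-$\supp(x)$ terms, pass to the limit on $\supp(x)$) shows $q^*$ is energy-minimal among feasible flows supported in $\supp(x)$, so uniqueness from Lemma~\ref{Formula for q} yields $q^*=q(x)$. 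Your route avoids Fact~\ref{fact:ATL1A} and the constant-support reduction entirely, treats all coordinates at once, and is arguably more robust: the paper's Case~1 tacitly uses that the rational formula on $G_B^*$ extends continuously as $x_j\to 0$ for $j\in B\setminus\supp(x)$, which is not quite immediate since the reduced Laplacian $A_B R_B^{-1}A_B^T$ can degenerate in that limit, whereas your energy argument sidesteps this. What the paper's route buys in exchange is a quantitative decay rate $\abs{q_j(x^{(n)})} = O\bigl((x^{(n)})_j\bigr)$ near the boundary, which your soft compactness argument does not provide.
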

\begin{proof} 
	Let $(x^{(n)})$ be a sequence of points in $G^*$ converging to $x \in G^*$. 
	We need to show $f(x^{(n)}) \rightarrow f(x)$. For $i \in \supp(x)$ we must have $i \in \supp(x^{(n)})$ 
	for all sufficiently large $n$. Let $B$ be any superset of $\supp(x)$ and 
	consider the subsequence of $(x^{(n)})$ with $\supp(x^{(n)}) = B$. 
	We reuse $(x^{(n)})$ to denote the subsequence. 
	If the subsequence is finite, there is nothing to show. 
	So assume the subsequence is infinite. 
	We need to show $f_i(x^{(n)}) \rightarrow f_i(x)$. The proof proceeds by case distinction.
	
	\textbf{Case 1:} For $i \in \supp(x)$, we clearly have $f_i(x^{(n)}) \rightarrow f_i(x)$ 
	for $n \rightarrow \infty$, since $q_i$ is defined by a rational function 
	(Lemma~\ref{Formula for q}). 
	
	\textbf{Case 2:} For $i \in  B \setminus \supp(x)$, we need to prove $f_i(x^{(n)}) \rightarrow 0$.
	For this it suffices to show $q_i(x^{(n)}) \rightarrow 0$.
	Let $x^*$ be an optimal solution to~\lref{LP} with $\supp(x^*)=I$. 
	We may assume that $(x^{(n)})_i \ge x_i/2$ for all $i \in I$ and all $n$. Let 	
	\[
	\epsilon=\min_{i\in I} \frac{(x^{(n)})_{i}}{x^*_{i}} \ge \frac{1}{2}\cdot \min_{i\in I} \frac{x_{i}}{x^*_{i}}>0.
	\]
	Hence, $x^*\leq \epsilon\cdot(x^{(n)})$ and thus by Fact~\lref{fact:ATL1A} we have
	\[
	\norm{A_B^{T}p_B(t)}_{\infty}\leq\sum_{i \in I}x_i^*\norm{A_B^{T}L_B(t)^{-1}A_i}_{\infty}\leq\norm{c}_{1}\cdot M /\epsilon.
	\]
	By the proof of Lemma~\ref{Extendability} Part d) and by Lemma~\ref{Formula for q} it holds that
	\[ 
	\left|q_{i}(x^{(n)})\right|=\left|\frac{(x^{(n)})_{i}}{c_{i}}\cdot A_{B}^{T}p_{b}\right|\le\frac{(x^{(n)})_{i}}{c_{i}}\cdot\norm{c}_{1}\cdot\frac{M}{\epsilon}=(x^{(n)})_{i}\cdot O(1).
	\]
	Since $(x^{(n)})_i \rightarrow x_i=0$ for all $i\in [m]\backslash\supp(x)$, we have $\abs{q_{i}(x^{(n)})}\rightarrow 0$
	for all $i\in B\backslash\supp(x)$.
\end{proof}

\section{Putting it Together}\label{Details of the Convergence Proof}

We complete the proof of Theorem~\ref{Main Theorem}. Let $x(t)$, $t \in [0,\infty)$ be the global solution of \lref{full non-uniform dynamics} with $x(0) = x^0 \in G^*$. Then $x(t) \in G^*$ for all $t \in [0,\infty)$ by Theorem~\ref{Thm: Existence}. Suppose there is a sequence $t_n$ with $t_n \rightarrow \infty$ and $x(t_n) \rightarrow p$ as $n \rightarrow \infty$. Then $p \in G^*$ by Corollary~\ref{positive limit points}. Now $V(x(t_n)) \rightarrow V(p)$ and $\dot{V}(x(t_n)) \rightarrow \dot{V}(p)$. The latter equality holds since $V(x)$ is $C^1$ and hence $\dot{V}(x) = \nabla V \circ f(x)$, and since $f$ is continuous by Lemma~\ref{f is continuous}.

However, $\dot{V}(x(t)) \le 0$ and $V(x(t)) \ge V(p)$ for all $t \in [0,\infty)$ because $V(x(t))$ is decreasing. So $\dot{V}(x(t)) \rightarrow 0$ as $t \rightarrow \infty$. Hence, by uniqueness of limits $\dot{V}(p) = 0$.

Finally $\dot{V}(p) = 0$ implies that $p$ is an optimal solution to~\lref{LP} (Theorem~\ref{Thm: Lyapunov Function}).

\section{Approach to Optimum: A Simple Example}\label{Approach to Optimum}

We study the nonuniform dynamics for the simple example of Figure~\ref{example} in more detail. We consider 
``minimize $c_1x_1 + c_2x_2$, $x_1 + x_2 = 1$, $x \ge 0$, where $c_2 > c_1$'' 
and  the Physarum dynamics $\dot{x_i} = d_i (q_i - x_i)$, $i = 1,2$. The unique optimum is $(1,0)$. We ask how the dynamics enters the optimal point?

Assume, we are in the point $(x_1,x_2)$. The conductance of edge $i$ is $x_i/c_i$. The conductance of the system of parallel edges is $x_1/c_1 + x_2/c_2 = (x_1 c_2 + x_2 c_1)/c_1c_2$. Therefore the potential difference is
$\Delta = c_1c_2/(x_1 c_2 + x_2 c_1$ and hence with $N = x_1 c_2 + x_2 c_1$ 
\[
q_i = \frac{x_i}{c_i} \Delta = \frac{x_i c_{3 - i}}{x_1 c_2 + x_2 c_1} = \frac{x_i c_{3-i}}{N}
\]
and
\[
\dot{x}_i = d_i (q_i - x_i) = d_i x_i (c_{3 - i}/N - 1) = d_i x_i (c_{3 - i} - N)/N.
\]

We know that the dynamics converges to the point $(1,0)$ and are interested in the behavior near the limit point. Therefore let $(x_1,x_2) = (1 - \epsilon_1,\epsilon_2)$. Then
\begin{align*}
-\dot{\epsilon}_1 =  \dot{x}_1 &= d_1 (1 - \epsilon_1)  \frac{c_2 - c_2 (1 - \epsilon_1) - \epsilon_2 c_1}{c_2 - c_2 \epsilon_1 + \epsilon_2 c_1}  \approx d_1 \frac{c_2 \epsilon_1 - c_1 \epsilon_2}{c_2}\\
\dot{\epsilon}_2 =  \dot{x}_2 &= d_2 \epsilon_2 \frac{c_1 - c_2(1 - \epsilon_1) - c_2 \epsilon_2}{c_2 - c_2 \epsilon_1 + \epsilon_2 c_1} \approx d_2 \epsilon_2 \frac{c_1 - c_2}{c_2}.
\end{align*}
For the approximation, we only kept the terms linear in the epsilons and ignored all higher powers. This is justified since we are interested in the behavior for small $\epsilon_1$ and $\epsilon_2$. We continue with the approximations. 

From the second equation, we conclude
\[      \epsilon_2(t) = e^{- (c_2 - c_1)d_2/c_2 t}.\]
From the first equation, we conclude
\[   \dot{\epsilon}_1(t) = - d_1 \epsilon_1(t) + (d_1 c_1/c_2) \epsilon_2(t) \ge - d_1 \epsilon_1(t).\]
Thus $\epsilon_1(t) \ge e^{-d_1 t}$. If $d_1 < (c_2 - c_1) d_2 /c_2$ or equivalently $d_2 \ge d_1 c_2/(c_2 - c_1)$, the rate at which $\epsilon_2$ goes to zero is higher than the rate for $\epsilon_1$ and hence the trajectory converges to the $x$-axis and enters the optimal point horizontally.  

So assume $d_1 > (c_2 - c_1) d_2/c_1$. We try the ``Ansatz'' $\epsilon_1(t)  = e^{-d_1 t} f(t)$. Then $\dot{\epsilon}_1 = e^{-d_1t} \dot{f}(t) - d_1 e^{-d_1t} f(t)$ and hence
\[
	\dot{f}(t) = e^{d_1 t} (\dot{\epsilon}_1 + d_1 e^{-d_1 t} f(t))
	= e^{d_1 t}\left(-d_1 \epsilon_1(t) + \frac{d_1c_1}{c_2} \epsilon_2(t) + d_1 \epsilon_1\right)
	= \frac{d_1 c_1}{c_2} e^{\left(d_1 - (c_2 - c_1)\frac{d_2}{c_2}\right) t}.
\]
Thus
\[ 
f(t)=\frac{c_{1}d_{1}}{c_{2}d_{1}+\left(c_{1}-c_{2}\right)d_{2}}\cdot\left[e^{\left(d_{1}-d_{2}\cdot\frac{c_{2}-c_{1}}{c_{2}}\right)\cdot t}-1\right]+C
\]
and hence
\[ 
\epsilon_{1}(t)=\frac{c_{1}d_{1}}{c_{2}d_{1}-\left(c_{2}-c_{1}\right)d_{2}}\cdot\left[e^{-\left(d_{2}\cdot\frac{c_{2}-c_{1}}{c_{2}}\right)\cdot t}-e^{-d_{1}\cdot t}\right]+Ce^{-d_{1}\cdot t}.
\]
We conclude that $\epsilon_1$ and $\epsilon_2$ go to zero at the same rate, and the trajectory follows a straight line with slope 
\[
\tan\phi(t)=-\frac{\epsilon_{2}(t)}{\epsilon_{1}(t)}\approx \frac{\left(c_{2}-c_{1}\right)d_{2}-c_{2}d_{1}}{c_{1}d_{1}}.
\]
For $d_1 = (c_2 - c_1) d_2/c_2$, the analysis is inconclusive. However, since the behavior should be continuous in the $d$'s, it is natural to conjecture that the trajectories converge to the $x$-axis. 

Even less formally, this result can also be obtained as follows. Assume that the trajectory starting in the point $  (1 - \epsilon_1,\epsilon_2)$ is essentially straight, we must have (observe that the straight line from $  (1 - \epsilon_1,\epsilon_2)$  to the optimal point $(1,0)$ has direction $(\epsilon_1,-\epsilon_2)$)
\[    
\alpha \epsilon_1 = d_1 \frac{c_2 \epsilon_1 - c_1 \epsilon_2}{c_2}\quad\text{and}\quad - \alpha \epsilon_2 = d_2 \epsilon_2 \frac{c_1 - c_2}{c_2},
\]
for some $\alpha$. For $\epsilon_2 > 0$, we obtain $\alpha = d_2(c_2 - c_1)/c_2$ and further
$d_{1}c_{1}\cdot\epsilon_{2}=\left[d_{1}c_{2}-d_{2}\left(c_{2}-c_{1}\right)\right]\cdot\epsilon_{1}$
from the first equation. Thus,
\[   
\tan\phi(t)=-\frac{\epsilon_{2}(t)}{\epsilon_{1}(t)}=\frac{\left(c_{2}-c_{1}\right)d_{2}-c_{2}d_{1}}{d_{1}c_{1}},
\]
as above. Since $\epsilon_1 > 0$, we need  $d_{1}c_{2}>d_{2}(c_{2}-c_{1})$, i.e.,  $d_1 > (c_2 - c_1) d_2/c_2$. 

Let us specialize to the case $c_1 = 1$, $c_2 = 2$. 
\begin{itemize}
	\item If $d_2 > c_2 d_1 /(c_2 - c_1) = 2 d_1$, the trajectories enter the optimal point horizontally. Whether a trajectory enters from the left or from the right depends on the initial point. The right plot in Figure~\ref{example} shows an example. 
	\item If $d_2 < 2 d_1$, the trajectories enter the optimal point with slope 
	$-2+d_{2}/d_{1}$. In particular, for $d_1=d_2 = 1$ (uniform dynamics), the slope is $-1$. The left plot in Figure~\ref{example} shows an example. 
\end{itemize}

\section{Speed of Convergence}\label{sec:NonUnifMtxInit_NumEval}

At the suggestion of an anonymous reviewer,
we conduct a numerical experiment that explores the convergence-time
dependence of the non-uniform dynamics~\lref{non uniform dynamics}
on the reactivity matrix $D$.

Here, we consider a min-cost flow problem with a unit demand
between a single source-sink pair of nodes 
and parameterized costs $c^{(f)}$, see Figure~\ref{Graph}.
We compare the convergence-time of the dynamics~\lref{non uniform dynamics}
when the matrix $D$ is either:
i) $\diag(c^{(f)})$; or
ii) the identity matrix $I$ (i.e., the uniform dynamics~\lref{uniform dynamics}),
see Figure~\ref{ConvTime}.

\begin{figure}[H]
	\begin{center}
		\includegraphics[width=0.32\textwidth]{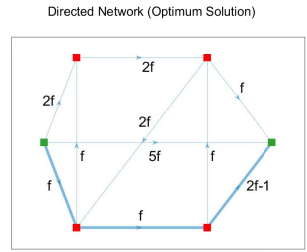}
	\end{center}
	\caption{\label{Graph}
		Consider a min-cost flow problem with a unit demand between a single source-sink pair of nodes.
		Let $N^{(f)}=(\cN,\cA,c^{(f)})$ be a directed network with a (green) source node on the left
		and a (green) sink node on the right.
		The optimum min-cost flow is unique, it has cost $4f-1$, and
		its arcs are made bold.}
\end{figure}

We implement and execute the forward Euler discretization of 
the non-uniform dynamics~\lref{non uniform dynamics}, which reads
\begin{equation}\label{eq:discrNUD}
x_e(t+1) = (1 - hd_e)\cdot x_e(t) + hd_e\cdot q_e(t),
\end{equation}
where the step size\footnote{Note that the step size satisfies $0<h\leq1/2$ 
	for reactivity matrices $D\in\{C,I\}$.} 
$h=\tfrac{1}{2\lVert c \rVert_{1}}$,
the number of iterations $t=\tfrac{1}{h}\log\tfrac{\lVert c \rVert_{1}}{\epsilon}$,
and the error $\epsilon=\tfrac{1}{10}$.

\begin{figure}[H]
	\begin{center}
		\includegraphics[width=0.315\textwidth]{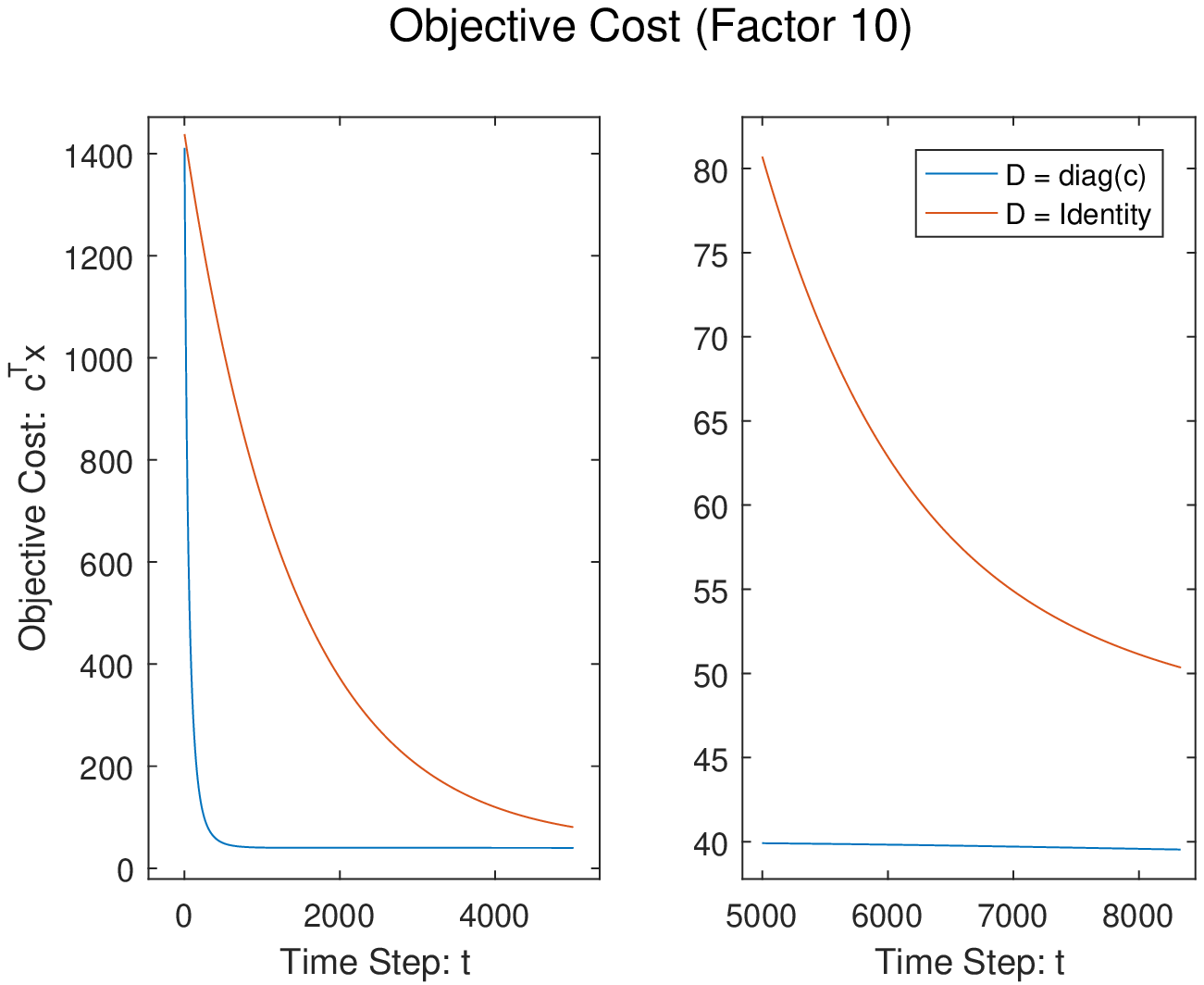}
		\hspace{0.4cm}\vspace{-0.1em}
		\includegraphics[width=0.315\textwidth]{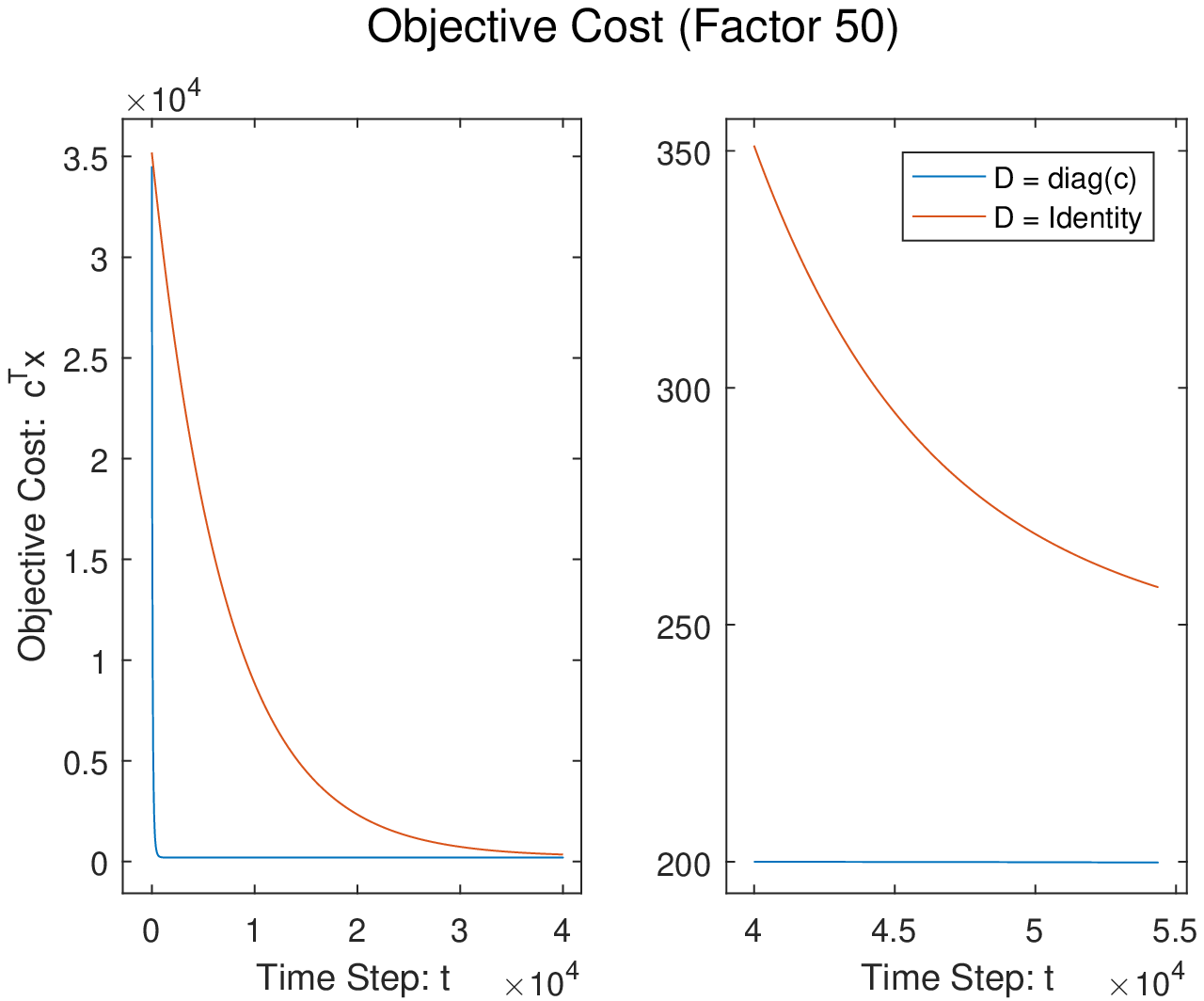}
		\vspace{0.4cm}\vspace{-0.1em}
		\includegraphics[width=0.315\textwidth]{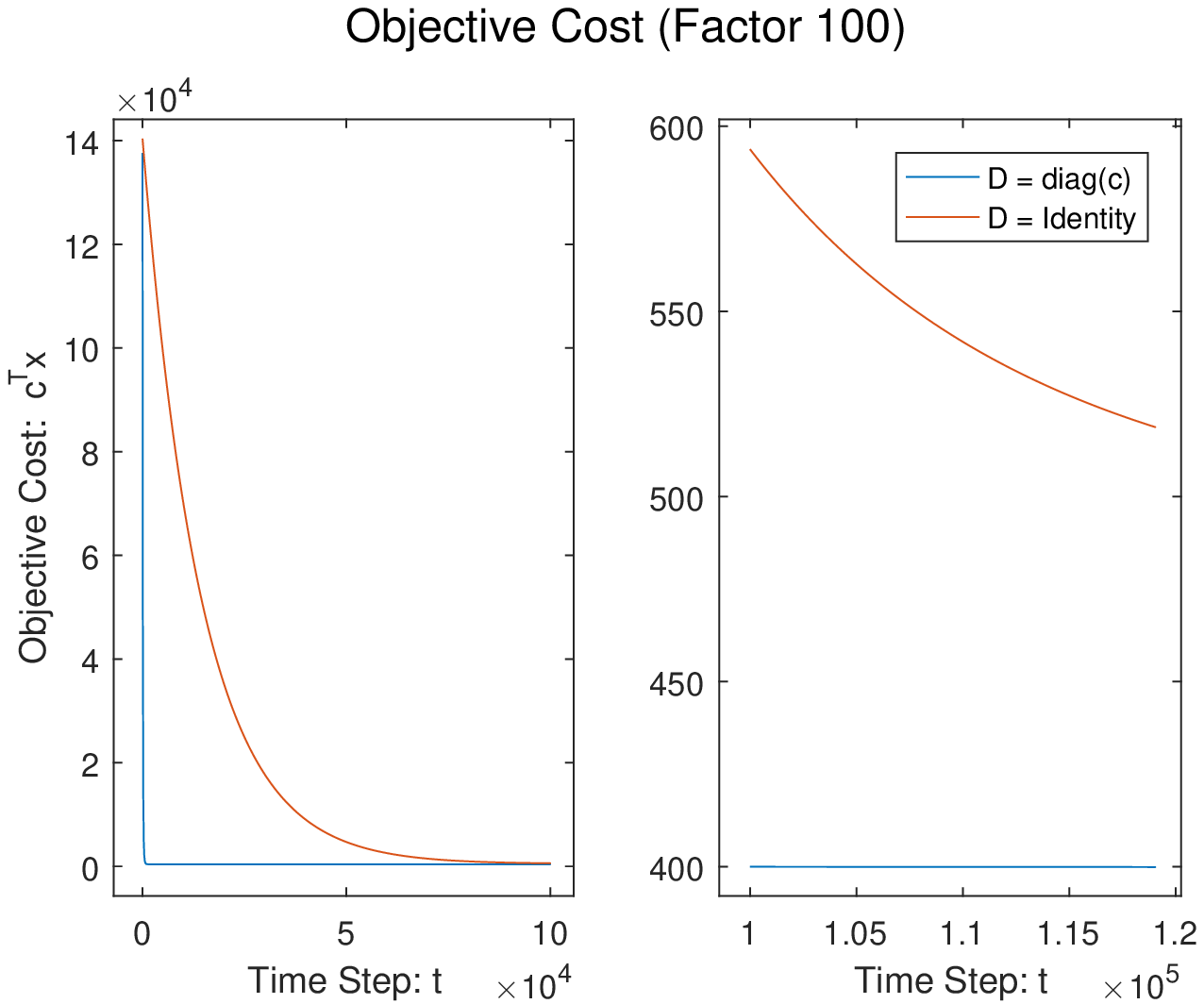}
	\end{center}
	\caption{\label{ConvTime}
		We consider the network from above for different $f\in\{10,50,100\}$.
		We initialize the discretized non-uniform dynamics~\lref{eq:discrNUD} with a fixed 
		vector $x^{(0)}$ such that $x_{e}^{(0)}=1/100$ for $e\in\cA_{opt}$ 
		and $x_{e}^{(0)}=100$ otherwise,
		and compare the convergence-time when matrix $D$ is either:
		i) $\diag(c^{(f)})$; or ii) the identity matrix $I$.}
\end{figure}

The experimental data in Figure~\ref{ConvTime} suggests that 
the non-uniform dynamics~\lref{non uniform dynamics} with $D=\diag(c^{(f)})$
can achieve a substantial convergence-time improvement
over the uniform dynamics~\lref{uniform dynamics}.

We leave the convergence-time dependence of the non-uniform dynamics~\lref{non uniform dynamics} 
on the reactivity matrix $D$ as a subject for future research.

\section{Acknowledgements}
KM wants to thank his colleague Mark Groves for a private lesson on dynamical systems.
The authors want to thank the anonymous reviewers for their careful reading,
constructive comments, and insightful suggestions.

\newpage
{\footnotesize
\bibliographystyle{alpha}
\bibliography{ref} }

\end{document}